\theoremstyle{plain}
\newtheorem{theorem}{Theorem}
\newtheorem{proposition}[theorem]{Proposition}
\newtheorem{lemma}[theorem]{Lemma}
\newtheorem{corollary}[theorem]{Corollary}
\newtheorem{remark}[theorem]{Remark}
\newtheorem{example}[theorem]{Example}
\newcommand{\dd}{\mathrm{d}}
\newcommand{\F}{\mathcal{F}}
\def\E{\mathbb{E}}
\def\R{\mathbb{R}}
\def\P{\mathbb{P}}
\def\ve{\varepsilon}
\begin{document}

\title{\textbf{\Large Strong approximation of stochastic differential equations driven by a time-changed Brownian motion with time-space-dependent coefficients}}

\date{ }

\author{Sixian Jin\thanks{Fordham University. Email:\ sjin27@fordham.edu} \ and Kei Kobayashi\thanks{Corresponding author. Fordham University. Email:\ kkobayashi5@fordham.edu}}

\maketitle

%
%

\begin{abstract}
The rate of strong convergence is investigated for an approximation scheme for a class of stochastic differential equations driven by a time-changed Brownian motion, where the random time changes $(E_t)_{t\ge 0}$ considered include the inverses of stable and tempered stable subordinators as well as their mixtures. Unlike those in the work of Jum and Kobayashi (2016), the coefficients of the stochastic differential equations discussed in this paper 
depend on the regular time variable $t$ rather than the time change $E_t$. 
This alteration makes it difficult to apply the method used in that paper.
To overcome this difficulty, we utilize a Gronwall-type inequality involving a stochastic driver  
to control the moment of the error process.
Moreover, in order to guarantee that an ultimately derived error bound is finite, we establish a useful criterion for the existence of exponential moments of powers of the random time change. \vspace{3mm}

\noindent\textit{Key words:} stochastic differential equation, numerical approximation, rate of convergence, inverse subordinator, random time change, time-changed Brownian motion.\vspace{3mm}

\noindent\textit{2010 Mathematics Subject Classification:} 65C30, 60H10.

\end{abstract}

\section{Introduction}\label{section_introduction}

Let $B=(B_t)_{t\ge0}$ be a standard Brownian motion and $E=(E_t)_{t\ge0}$ be a stochastic process defined by the inverse of a subordinator $D=(D_t)_{t\ge0}$ with infinite L\'evy measure, independent of $B$. The composition $B\circ E=(B_{E_t})_{t\ge0}$, called a \textit{time-changed Brownian motion}, and its various generalizations have been widely used to model anomalous diffusions arising in e.g.\ physics \cite{MetzlerKlafter00,Nigmatullin,Zaslavsky}, finance \cite{GMSR,MagdziarzOrzelWeron}, hydrology \cite{BWM}, and cell biology \cite{Saxton}. 
See Chapter 1 of \cite{HKU-book} for details.
The time-changed Brownian motion is non-Markovian (\cite{MS_1,MS_2}). 
Also, the identity $\E[B_{E_t}^2]=\E[E_t]$ holds, 
and in particular, if the subordinator $D$ is stable with index $\beta\in(0,1)$,
then $\E[B_{E_t}^2]=t^\beta/\Gamma(1+\beta)$,
which shows that in large time scales
particles represented by $B\circ E$ spread at a slower rate than the rate at which Brownian particles diffuse. 
 Moreover, the densities of $B\circ E$ satisfy the time-fractional order Fokker--Planck (or forward Kolmogorov) equation $\partial_t^\beta u(t,x) =(1/2)\Delta u(t,x)$, where $\partial_t^\beta$ denotes the Caputo fractional derivative of order $\beta$ with respect to the variable $t$. 
Various extensions of $B\circ E$ and their associated 
fractional order
 Fokker--Planck equations have been investigated, including time-changed fractional Brownian motions (see \cite{HKRU,HKU-2,MNX}) and stochastic differential equations (SDEs) involving the random time change $E$ (see below).

 In this paper, we investigate the rate of strong convergence of a numerical approximation scheme for an SDE of the form 
\begin{align}\label{SDE_001_0}	
	X_t=x_0+\int_0^t F(s,X_s)\, \dd E_s+\int_0^t G(s,X_s)\,\dd B_{E_s}, 
\end{align}
where the coefficients $F$ and $G$ satisfy some regularity conditions. 
Note that since $E$ is a nondecreasing process and $B\circ E$ is a martingale with respect to a certain filtration, 
SDE \eqref{SDE_001_0} is understood within the framework of stochastic integrals driven by semimartingales; see \cite{Kobayashi} or the beginning of Section \ref{section_approximation} for details.
SDE \eqref{SDE_001_0} and its extensions to cases involving jump components have recently drawn more and more attention. For example, 
papers \cite{NaneNi2017,NaneNi2018,Wu}
established stability in various senses of solutions of SDEs driven by time-changed processes using the time-changed It\^o formula derived in \cite{Kobayashi} and its generalizations. 
In \cite{MagdziarzZorawik,NaneNi2016}, fractional order Fokker--Planck equations were derived for solutions of SDEs of the form \eqref{SDE_001_0} with L\'evy noise terms added.
Practical situations where such SDEs naturally arise include Langevin-type subdiffusive dynamics in physics with force terms of the form $F(t,x)=f_1(t)f_2(x)$ and constant diffusion coefficient \cite{Heinsalu2007,WeronMagdziarzWeron2008} and stock price dynamics with trapping events and volatility clustering in finance where the diffusion coefficient may also take the form $G(t,x)=g_1(t)g_2(x)$ \cite{Lv2012,Magdziarz_comment,Onalan2015}.
 As the well-established theory of classical It\^o SDEs (without a random time change) enabled a number of mathematicians and scientists in various fields to explore questions about more complicated diffusion processes than the Brownian motion itself, further investigations of SDE \eqref{SDE_001_0} and its extensions are necessary and expected in order to deal with more complex anomalous diffusions.

This paper partly builds upon some results established in \cite{Magdziarz_simulation,Magdziarz_spa}, which investigated the process $X$ defined by \eqref{SDE_001_0} with $F(t,x)=F(t)$ having finite variation and $G(t,x)\equiv 1$ (in which case \eqref{SDE_001_0} is no longer an SDE since the coefficients do not depend on $x$). In those papers, a numerical approximation scheme for $X$ was presented together with the rate of strong convergence. 
On the other hand, even though the idea employed in \cite{Magdziarz_simulation,Magdziarz_spa} together with the Euler--Maruyama scheme allows us to approximate the solution $X$ of SDE \eqref{SDE_001_0} with general space-time-dependent coefficients, convergence of the approximation scheme has not been investigated. Establishing the rate of convergence for the scheme is an extremely important issue both theoretically and practically in numerical analysis of complex systems displaying anomalous dynamics, and that is the main contribution of this paper. 
In particular, our convergence results will help justify the use of Monte Carlo techniques in approximating the solutions of the fractional order Fokker--Planck equations derived in \cite{MagdziarzZorawik,NaneNi2016}.

The main difficulty in analyzing SDE \eqref{SDE_001_0} lies in the 
``asynchrony'' between the time variable $s$ in the integrands and the time change $E_s$ in the driving processes. To see this, consider instead an SDE with a synchronized time clock of the form 
\begin{align}\label{SDE_002_0}	
	X_t=x_0+\int_0^t F(E_s,X_s)\, \dd E_s+\int_0^t G(E_s,X_s)\, \dd B_{E_s},
\end{align}
where the coefficients depend on $E_s$ rather than $s$. For this SDE, 
the associated fractional order Fokker--Planck equation was established in \cite{HKU-1}, and the orders of strong and weak convergence of an approximation scheme were derived in \cite{JumKobayashi}.
 The key fact used in those papers was the \textit{duality principle} between SDE \eqref{SDE_002_0} and the classical It\^o SDE 
\begin{align}\label{SDE_003_0}	
	Y_t=x_0+\int_0^t F(s,Y_s)\, \dd s+\int_0^t G(s,Y_s)\, \dd B_s. 
\end{align}
Namely, if $Y_t$ solves \eqref{SDE_003_0}, then $X_t:=Y_{E_t}$ solves \eqref{SDE_002_0}, while if $X_t$ solves \eqref{SDE_002_0}, then $Y_t:=X_{D_t}$ solves \eqref{SDE_003_0}, where $D$ is the original subordinator (see Theorem 4.2 of \cite{Kobayashi}). This one-to-one correspondence between the two SDEs allows us to approximate the solution of \eqref{SDE_002_0} by the composition 
	$X^\delta_t:=(Y^\delta\circ E^\delta)_t=Y^\delta_{E^\delta_t}$,
 where $\delta\in(0,1)$ refers to an equidistant step size, $E^\delta$ is the approximation process for $E$ defined in \cite{Magdziarz_simulation,Magdziarz_spa}, and $Y^\delta$ is the approximation of $Y$ based on the Euler--Maruyama scheme. The independence assumption between $B$ and $E$ together with representation \eqref{SDE_003_0} implies independence between $Y$ and $E$, and therefore, the two approximation processes $Y^\delta$ and $E^\delta$ can be constructed independently and simply composed 
 to define the approximation process $X^\delta$. 
 The independence also allows the two types of errors (one ascribed to the approximation of $Y$ and the other due to the approximation of $E$) to be analyzed separately.

On the other hand, a referee of the paper \cite{JumKobayashi} raised an important question of whether the methods used for SDE \eqref{SDE_002_0} can be applied to SDE \eqref{SDE_001_0} or not. 
Unfortunately, \textit{the approach used in \cite{JumKobayashi} no longer works for approximation of the solution of SDE \eqref{SDE_001_0}.} Indeed, the duality principle implies the corresponding SDE takes the form 
\begin{align}\label{SDE_004_0}	
	Y_t=x_0+\int_0^t F(D_{s-},Y_s)\, \dd s+\int_0^t G(D_{s-},Y_s)\, \dd B_s,
\end{align}
which clearly shows $Y$ depends on $D$ (and hence on $E$ as well), and consequently, the conditioning argument based on the independence of $Y$ and $E$ used for SDE \eqref{SDE_002_0} cannot be applied. This observation, which appears in Remark 3.2(5) of \cite{JumKobayashi},  
forces us to take a 
different
 approach in dealing with SDE \eqref{SDE_001_0}. In particular, the duality principle is not used at all. 
Instead, we utilize a Gronwall-type inequality involving a stochastic driver  
to control the moment of the error process.
Moreover, in order to eventually obtain a meaningful bound for the moment in Section 3,
we derive a useful criterion for the existence of the exponential moment $\E[e^{\lambda E_t^r}]$ of the $r$th power of the inverse subordinator in Section 2; 
this may be of independent interest to some readers.
It is also worth mentioning that, even though the approximation scheme used in this paper is of Euler--Maruyama type, the order of strong uniform convergence to be established in Theorem \ref{Theorem_with_noise} is strictly less than $1/2$. This is different from the classical setting of It\^o SDEs without a random time change, where the order $1/2$ can be achieved. An explanation of why this phenomenon occurs under the time change will be discussed in 
Remark \ref{Remark_with_noise}(3).

\section{Exponential moments of powers of inverse subordinators}\label{section_inverse}

Throughout the paper, 
$(\Omega,\F,\P)$ denotes a complete probability space and
 $D=(D_t)_{t\ge 0}$ denotes a subordinator starting at $0$ with Laplace exponent $\psi$ with killing rate 0, drift 0, and L\'evy measure $\nu$; i.e.\ $D$ is a one-dimensional nondecreasing L\'evy process with c\`adl\`ag paths starting at 0 with Laplace transform 
\begin{align}\label{def_LaplaceExponent}
	\mathbb{E}[e^{-sD_t}]=e^{-t\psi(s)}, \ \ \textrm{where} \ \ 
	\psi(s)=\int_0^\infty (1-e^{-sy})\,\nu(\dd y), \ \ s> 0,
\end{align}
with 
$\int_0^\infty (y\wedge 1)\, \nu(\dd y)<\infty$. 
\textit{We focus on the case when the L\'evy measure $\nu$ is infinite} (i.e.\ $\nu(0,\infty)=\infty$),
 which implies compound Poisson subordinators are excluded from our discussion.
Let $E=(E_t)_{t\ge 0}$ be the inverse of $D$; i.e.\ 
\[
	E_t:=\inf\{u>0; D_u>t\}, \ \ t\ge 0. 
\]
We call $E$ an \textit{inverse subordinator}. 
The assumption that $\nu(0,\infty)=\infty$ implies that $D$ has strictly increasing paths with infinitely many jumps (see e.g.\ \cite{Sato}), and therefore, $E$ has continuous, nondecreasing paths starting at 0. If the subordinator $D$ is stable with index $\beta\in(0,1)$, then $\psi(s)=s^\beta$ and the corresponding time change $E$ is called an \textit{inverse $\beta$-stable subordinator.}   
Note that the jumps of $D$ correspond to the (random) time intervals on which $E$ is constant, and during those constant periods, any time-changed process of the form  $X\circ E=(X_{E_t})_{t\ge 0}$ also remains constant. If $B$ is a standard Brownian motion independent of $D$, we can regard particles represented by the \textit{time-changed Brownian motion} $B\circ E$ as being trapped and immobile during the constant periods. See Figure \ref{figure_001}.

\begin{figure}
    \centering
    \includegraphics[width=3.8in]{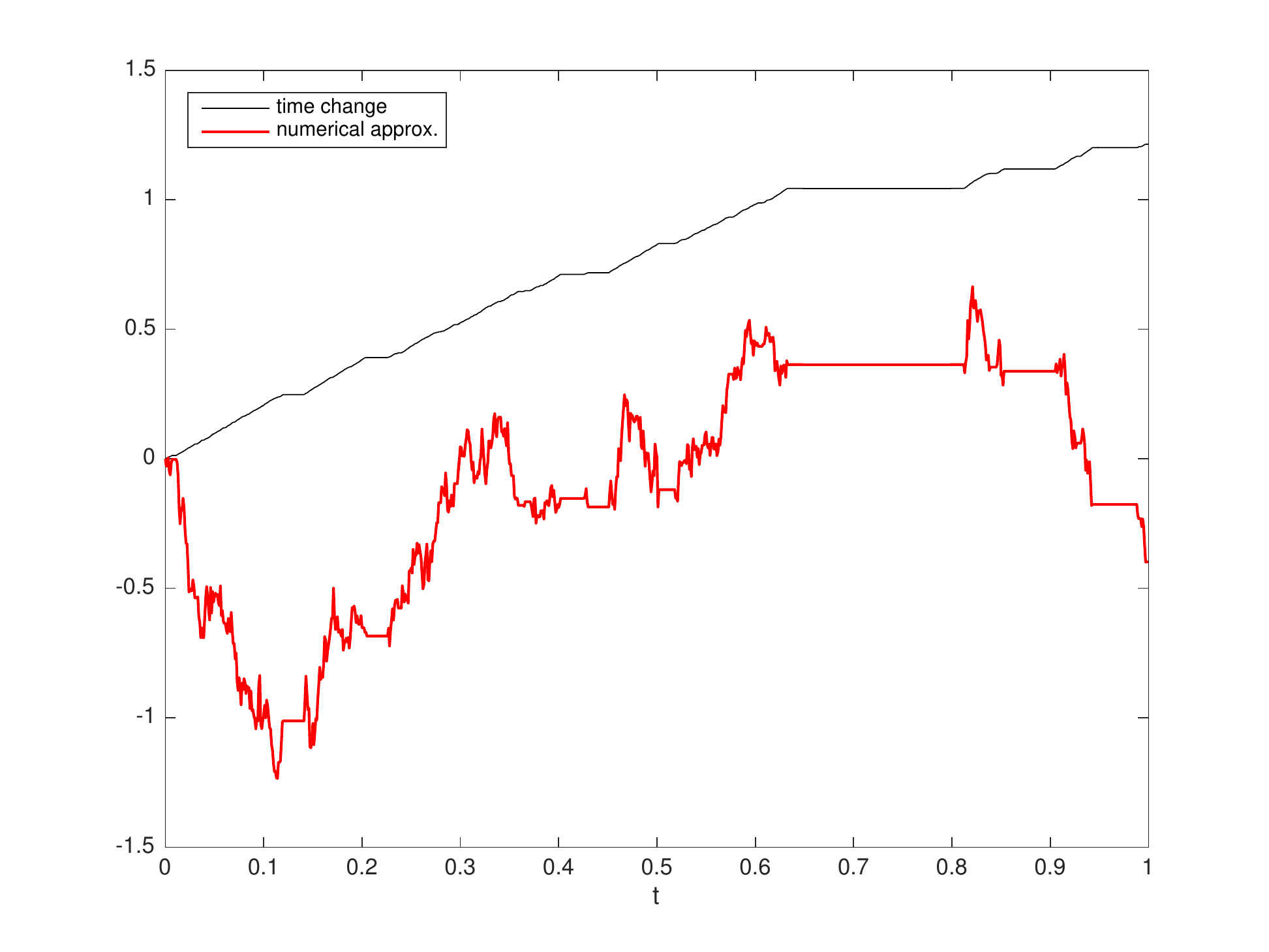}
    \caption{Sample paths of an inverse $0.9$-stable subordinator $E$ (black) and the corresponding time-changed Brownian motion $B\circ E$ (red), which share the same constant periods.}
    \label{figure_001}
\end{figure}

Any inverse subordinator $E$ with infinite L\'evy measure is known to have the exponential moment; i.e.\ $\mathbb{E}[e^{\lambda E_t}]<\infty$ for all $\lambda>0$ and $t> 0$ (see \cite{JumKobayashi,MagdziarzOrzelWeron}).
However, as is shown in Theorem \ref{Theorem_general}, whether the expectation $\mathbb{E}[e^{\lambda E_t^r}]$ with $r>1$ exists or not depends on the nature of the time change. 
In particular, if $E$ is an inverse $\beta$-stable subordinator, then 
 $\mathbb{E}[e^{\lambda E_t^2}]$ exists if $1/2<\beta<1$ while it does not if $0<\beta<1/2$. When $\beta=1/2$, whether the expectation exists or not depends on the relationship between $\lambda$ and $t$; 
 see Remark \ref{Remark_general}(2).

One situation where the need for the information about the existence  of $\mathbb{E}[e^{\lambda E_t^r}]$ arises is implicitly discussed in \cite{JumKobayashi}. 
Namely, consider a sequence $\{X^{(n)}\}_{n\ge 1}$ of stochastic processes 
converging to $X$ in $L^p$ with $p\ge 1$ uniformly on compact subsets of $[0,\infty)$ with a bound 
$
	\|\sup_{s\in[0,t]}|X^{(n)}_s-X_s|\|_{L^p(\Omega)} 
	\le a_n g(t)
$
for all $t>0$, 
where $\{a_n\}$ is a sequence approaching 0
and $g(t)$ is a function of $t$.
If $E$ is an inverse subordinator independent of both $X$ and $\{X^{(n)}\}$, then a simple conditioning argument yields
\begin{align*}
	\biggl\|\sup_{s\in[0,t]}|X^{(n)}_{E_s}-X_{E_s}|\biggr\|_{L^p(\Omega)} 
	=\biggl\|\sup_{s\in[0,E_t]}|X^{(n)}_{s}-X_{s}|\biggr\|_{L^p(\Omega)} 
	\le a_n \left\|g(E_t)\right\|_{L^p(\Omega)}.
\end{align*}
Therefore, if e.g.\ $g(t)$ takes the form $g(t)=ce^{ct^2}$ and $E$ is an inverse $\beta$-stable subordinator with $\beta\in(0,1/2)$, then by the discussion in the previous paragraph,
$\left\|g(E_t)\right\|_{L^p(\Omega)}=c\hspace{1pt}(\mathbb{E}[e^{pcE_t^2}])^{1/p}=\infty$,
 and consequently, the above bound is no longer meaningful. 
Such a simple example illustrates the significance of criteria for the existence and non-existence of the expectation of the form $\mathbb{E}[e^{\lambda E_t^r}]$.

To describe the kinds of inverse subordinators we are mainly concerned with in this paper, let us introduce the notion of regularly varying and slowly varying functions. 
A function $f:(0,\infty)\to (0,\infty)$ is said to be \textit{regularly varying at $\infty$ with index $\alpha\in\mathbb{R}$} if 
$
	\lim_{s\to \infty} f(cs)/f(s)=c^\alpha
$
for any $c>0$. 
We denote by $\mathrm{RV}_\alpha$ the class of regularly varying functions at $\infty$ with index $\alpha$.
A function $\ell:(0,\infty)\to (0,\infty)$ is said to be \textit{slowly varying at $\infty$} if $\ell\in \mathrm{RV}_0$ 
(i.e.\ $\ell\in \mathrm{RV}_\alpha$ with $\alpha=0$).
Every regularly varying function $f$ with index $\alpha\in\mathbb{R}$ is represented as 
$
	f(s)=s^\alpha \ell(s)
$
 with $\ell$ being a slowly varying function.

Note that the following two Laplace exponents are regularly varying at $\infty$ with index $\beta\in(0,1)$: $\psi(s)=s^\beta$, which corresponds to a stable subordinator with index $\beta$, and $\psi(s)=(s+\kappa)^\beta-\kappa^\beta$ with $\kappa>0$, which corresponds to an exponentially tempered (or tilted) stable subordinator with index $\beta$ and tempering factor $\kappa$. 
On the other hand, $\psi(s)=\log(1+s)$, which corresponds to a Gamma subordinator, is slowly varying at $\infty$.
We now state the main theorem of this section, which will be used in the proof of Theorem \ref{Theorem_with_noise} in the next section.

\begin{theorem}\label{Theorem_general}
Let $E$ be the inverse of a subordinator $D$ whose Laplace exponent $\psi$ is regularly varying at $\infty$ with index $\beta\in[0,1)$. 
If $\beta=0$, assume further that $\nu(0,\infty)=\infty$. Fix $\lambda>0$, $t> 0$ and $r>0$.  
\begin{enumerate}[(1)]
\item If 
$r<1/(1-\beta)$, then $\mathbb{E}[e^{\lambda E^r_t}]<\infty$.
\item If 
$r>1/(1-\beta)$, then $\mathbb{E}[e^{\lambda E^r_t}]=\infty$. 
\end{enumerate}
\end{theorem}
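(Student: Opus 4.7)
The plan is to reduce both statements to quantitative estimates on the tail $\mathbb{P}(E_t>x)$ and then substitute them into the identity
\begin{align*}
\mathbb{E}[e^{\lambda E_t^r}] = 1 + \lambda r\int_0^\infty x^{r-1} e^{\lambda x^r}\,\mathbb{P}(E_t>x)\,\dd x.
\end{align*}
Since $\nu(0,\infty)=\infty$ forces $D$ to have strictly increasing paths, the inversion relation $\{E_t>x\}=\{D_x\le t\}$ holds, so the whole question can be phrased in terms of the Laplace-transform input $\mathbb{E}[e^{-sD_x}]=e^{-x\psi(s)}$. Markov's inequality yields the Chernoff-type upper bound
\begin{align*}
\mathbb{P}(D_x\le t)\le e^{st-x\psi(s)}\quad\text{for every } s>0,
\end{align*}
while splitting the defining Laplace integral at $t$ and bounding $e^{-su}\le e^{-st}$ on $(t,\infty)$ gives the elementary lower bound
\begin{align*}
\mathbb{P}(D_x\le t)\ge e^{-x\psi(s)}-e^{-st}\quad\text{for every } s>0.
\end{align*}
In both cases the task reduces to choosing $s=s(x)$ so that $x\psi(s)$ and $st$ match on the correct scale, which will be $x^{1/(1-\beta)}$ up to slowly varying corrections.

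For part (1), fix any exponent $\gamma\in(r,\,1/(1-\beta))$, available since $r<1/(1-\beta)$. Assuming $\beta>0$, choose $s(x)$ so that $\psi(s(x))\asymp x^{\gamma-1}$, i.e.\ $s(x)\asymp \psi^{-1}(c\,x^{\gamma-1})$ for a constant $c>0$ large enough that $x\psi(s(x))\ge 2s(x)t$ for all large $x$. Since $\psi^{-1}\in \mathrm{RV}_{1/\beta}$, Potter's bounds yield $s(x)t\asymp x^{(\gamma-1)/\beta}$ (up to slowly varying factors), and this is of strictly smaller order than $x^\gamma$ precisely when $\gamma<1/(1-\beta)$. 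The Chernoff bound then gives $\mathbb{P}(E_t>x)\le e^{-c'\,x^\gamma}$ for all sufficiently large $x$, so the integrand $x^{r-1}e^{\lambda x^r}\mathbb{P}(E_t>x)$ is integrable at infinity. The degenerate case $\beta=0$ requires a separate argument: there $r<1$ forces $x^r\le 1+x$, hence $\mathbb{E}[e^{\lambda E_t^r}]\le e^\lambda\,\mathbb{E}[e^{\lambda E_t}]$, which is already known to be finite under $\nu(0,\infty)=\infty$.

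For part (2), I perform the dual choice: select $s=s(x)$ so that $x\psi(s)=\tfrac12 s t$, i.e.\ $\psi(s)/s=t/(2x)$, which has an asymptotic solution as $x\to\infty$ because $\psi(s)/s\in\mathrm{RV}_{\beta-1}$ decays to zero. Solving this asymptotic relation by an inversion argument yields $s(x)t\sim C\,x^{1/(1-\beta)}\ell^*(x)$ for some slowly varying $\ell^*$, and the elementary lower bound then gives $\mathbb{P}(E_t>x)\ge\tfrac12 e^{-C\,x^{1/(1-\beta)}\ell^*(x)}$ for all large $x$. Since $r>1/(1-\beta)$, the slowly varying factor is absorbed by any polynomial margin, so $\lambda x^r - C\,x^{1/(1-\beta)}\ell^*(x)\to+\infty$; this forces the integrand in the tail formula to blow up and the integral to diverge, whence $\mathbb{E}[e^{\lambda E_t^r}]=\infty$. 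The main obstacle I expect is the careful bookkeeping of slowly varying corrections via Potter's bounds, so that the argument works uniformly for all $\psi\in\mathrm{RV}_\beta$ rather than only for the stable case $\psi(s)=s^\beta$ (where everything can be made explicit), and so that the boundary behavior at $\beta=0$, where no polynomial lower bound on $\psi$ is available, is handled correctly.
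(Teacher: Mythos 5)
Your proposal is correct in substance, but it takes a genuinely different route from the paper. The paper never manipulates Chernoff bounds directly: it invokes the lower-tail estimates of Jain and Pruitt for $\mathbb{P}(D_s\le sx(s))$, expressed through $g=\psi'$ and $R(s)=\psi(s)-s\psi'(s)$, packages them into a general criterion (Proposition~\ref{Proposition_tail} and Corollary~\ref{Corollary_tail}) via the same tail-integral representation of $\mathbb{E}[e^{\lambda E^r_t}]$ that you use (after the substitution $z=e^{s^r}$), and then verifies the criterion with the choice $x(s)=\psi'(s^r)$ by regular-variation bookkeeping ($\psi'\in\mathrm{RV}_{\beta-1}$, $-\psi''\in\mathrm{RV}_{\beta-2}$, $R\in\mathrm{RV}_\beta$ via Karamata), so that everything reduces to the sign of $(\beta-1)r+1$. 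You instead derive the tail bounds from scratch — Markov's inequality for the upper bound, and the truncation bound $\mathbb{P}(D_x\le t)\ge e^{-x\psi(s)}-e^{-st}$ for the lower bound — followed by inversion of regularly varying functions. Your route is more elementary and self-contained (no external tail estimates needed), and the crude lower bound suffices precisely because only the strict inequalities $r\lessgtr 1/(1-\beta)$ are at stake; the paper's route buys sharper constants in the exponent, which is what allows the authors to also resolve the borderline case $r=1/(1-\beta)$ (see Remark~\ref{Remark_general}(2)), where your factor-of-two losses would blur the threshold. Two small repairs to your write-up: in part (1) the parametrization $\psi(s(x))\asymp x^{\gamma-1}$ only sends $s(x)\to\infty$ (so that regular variation of $\psi^{-1}$ at infinity applies) when $\gamma>1$; this is harmless, since for $r\ge 1$ every $\gamma\in(r,1/(1-\beta))$ exceeds $1$, while for $r<1$ your own bound $x^r\le 1+x$, which you used for $\beta=0$, already gives $\mathbb{E}[e^{\lambda E^r_t}]\le e^{\lambda}\mathbb{E}[e^{\lambda E_t}]<\infty$ for every $\beta$. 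Also, the inequality $x\psi(s(x))\ge 2s(x)t$ for large $x$ comes from the strict ordering of the exponents $\gamma>(\gamma-1)/\beta$, not from taking the constant $c$ large (enlarging $c$ enlarges $s(x)$ by a factor $\asymp c^{1/\beta}$ as well), so the phrase ``for a constant $c$ large enough'' should be replaced by ``for all sufficiently large $x$.''
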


To establish Theorem \ref{Theorem_general}, we will use tail probability estimates for subordinators given in \cite{JainPruitt}. The estimates are given in terms of the Laplace exponent $\psi$ and hence easily applicable to quite general situations.
Let us introduce some auxiliary notations used in Section 5 of \cite{JainPruitt}. For a subordinator $D$ with Laplace exponent $\psi$ in \eqref{def_LaplaceExponent} and infinite L\'evy measure (i.e.\ $\nu(0,\infty)=\infty$), let 
\begin{align}\label{def_g(s)}
	g(s):=\psi'(s), \ \  R(s):=\psi(s)-s\psi'(s), \ \ s> 0.
\end{align}
Note that $\psi$ is a Bernstein function defined on $(0,\infty)$; i.e.\ $\psi\in C^\infty(0,\infty)$ with $\psi> 0$ and $(-1)^n\psi^{(n)}< 0$ for all $n\in\mathbb{N}$. This particularly implies that $g$ is a continuous, strictly decreasing function with $g(0)=\int_0^\infty y \,\nu(\dd y)$ and $g(\infty)=0$, while $R$ is a continuous, strictly increasing function with $R(0)=0$ and $R(\infty)=\nu(0,\infty)=\infty$, 
where we follow the convention $f(0):=\lim_{s\to 0}f(s)$ and $f(\infty):=\lim_{s\to \infty}f(s)$ for a given function $f$ defined on $(0,\infty)$.
The condition $\nu(0,\infty)=\infty$ guarantees that the inverse $E$ of $D$ has a finite exponential moment; i.e.\ $\mathbb{E}[e^{\lambda E_t}]<\infty$ for all $t> 0$ and $\lambda>0$ (see \cite{JumKobayashi,MagdziarzOrzelWeron}). 

\begin{proposition}\label{Proposition_tail}
Let $E$ be the inverse of a subordinator $D$ with Laplace exponent $\psi$ 
and infinite L\'evy measure $\nu$ 
in \eqref{def_LaplaceExponent}.
Let $g(s)$ and $R(s)$ be defined as in \eqref{def_g(s)}. Fix $\lambda>0$, $t>0$ and $r>0$. 
\begin{enumerate}[(1)]
\item If there exist a constant $\ve>0$ and a function 
$x(s):[M,\infty)\to (g(\infty), g(0))$ with $M>0$ such that 
$sx(s)>t$ and $R(g^{-1}(x(s)))/s^{r-1}\ge \lambda+\ve$ for all $s\ge M$,
then 
$\mathbb{E}[e^{\lambda E^r_t}]<\infty$.
\item If there exist a constant $\ve>0$ and a decreasing function 
$x(s):[M,\infty)\to (g(\infty), g(0))$ with $M>0$ such that 
$sx(s)<t$ and $R(g^{-1}(x(s)))/s^{r-1}\le \lambda-\ve$ for all $s\ge M$,
then 
$\mathbb{E}[e^{\lambda E^r_t}]=\infty$.
\end{enumerate}
\end{proposition}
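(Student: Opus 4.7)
The plan is to reduce everything to the one-sided tail $\P(D_u\le t)$ via the identity $\P(E_t>u)=\P(D_u\le t)$ (which follows directly from the definition $E_t=\inf\{v:D_v>t\}$ and the right-continuity of $D$) together with the layer-cake formula
\[
\E\bigl[e^{\lambda E_t^r}\bigr]=1+\lambda r\int_0^\infty u^{r-1}e^{\lambda u^r}\,\P(E_t>u)\,\dd u.
\]
Finiteness or divergence of this integral then hinges on matching upper/lower exponential estimates on $\P(D_u\le t)$ of the form $e^{-uR(g^{-1}(x(u)))}$, i.e.\ on a version of the Jain--Pruitt tail bounds indexed by the \emph{free} parameter $x(u)$ rather than the optimizing choice $x(u)=t/u$.

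For part (1), I would use the standard Chernoff bound $\P(D_u\le t)\le e^{\alpha t-u\psi(\alpha)}$ valid for every $\alpha>0$, take $\alpha=g^{-1}(x(u))$ so that $\psi'(\alpha)=x(u)$, and rewrite $\psi(\alpha)=R(\alpha)+\alpha\psi'(\alpha)$ to obtain
\[
\alpha t-u\psi(\alpha)=g^{-1}(x(u))\,\bigl(t-ux(u)\bigr)-uR(g^{-1}(x(u))).
\]
The hypothesis $sx(s)>t$ makes the first summand nonpositive, which yields $\P(D_u\le t)\le e^{-uR(g^{-1}(x(u)))}$. Feeding this into the layer-cake integral and using $R(g^{-1}(x(u)))/u^{r-1}\ge \lambda+\ve$ dominates the integrand on $[M,\infty)$ by $u^{r-1}e^{-\ve u^r}$, which is integrable; the contribution from $[0,M]$ is bounded because $\E[e^{\lambda E_t^r}\mathbf 1_{\{E_t\le M\}}]\le e^{\lambda M^r}$.

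For part (2), I would produce the reverse bound $\P(D_u\le t)\ge c\,e^{-uR(g^{-1}(x(u)))}$ through an Esscher transform $\dd\P^\alpha/\dd\P=e^{-\alpha D_u+u\psi(\alpha)}$ with the same tilting parameter $\alpha=g^{-1}(x(u))$. Under $\P^\alpha$, $D$ remains a subordinator with Laplace exponent $\psi(\cdot+\alpha)-\psi(\alpha)$, so $\E^\alpha[D_u]=ug(\alpha)=ux(u)<t$ by hypothesis. Inverting the change of measure,
\[
\P(D_u\le t)\ge \E^\alpha\!\left[e^{\alpha D_u-u\psi(\alpha)}\mathbf 1_{\{ug(\alpha)\le D_u\le t\}}\right]\ge e^{-uR(g^{-1}(x(u)))}\,\P^\alpha\bigl(ug(\alpha)\le D_u\le t\bigr),
\]
since $\alpha D_u\ge \alpha ug(\alpha)$ on the indicator's event. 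Combining with $R(g^{-1}(x(u)))/u^{r-1}\le \lambda-\ve$ then produces an integrand bounded below by $cu^{r-1}e^{\ve u^r}\,\P^\alpha(\cdot)$, which diverges provided the tilted probability is not too small.

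The main obstacle is therefore showing that $\P^\alpha(ug(\alpha)\le D_u\le t)$ is bounded below by a quantity of subexponential order $e^{o(u^r)}$. This is where the \emph{decreasing} hypothesis on $x(s)$ in the statement should be crucial: it ensures that as $u$ grows the tilting parameter $\alpha=g^{-1}(x(u))$ evolves monotonically, letting one control $\mathrm{Var}^\alpha(D_u)=-u\psi''(\alpha)$ against the gap $t-ux(u)$ uniformly, and then conclude by a one- or two-sided Chebyshev bound (together with $\P^\alpha(D_u\ge\E^\alpha D_u)\ge c>0$ from general subordinator estimates). I expect this tilted variance estimate to be exactly the content that can be extracted from Section 5 of \cite{JainPruitt}.
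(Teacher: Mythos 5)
Your part (1) is fine, and it is in essence a self-contained derivation of the bound the paper simply imports: the Chernoff estimate with tilting parameter $\alpha=g^{-1}(x(u))$, combined with $\psi(\alpha)=R(\alpha)+\alpha g(\alpha)$ and $ux(u)>t$, is exactly Lemma 5.2(i) of Jain--Pruitt, and your layer-cake/change-of-variables step matches the paper's computation.

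Part (2), however, has a genuine gap, and it sits exactly where you flag it: the lower bound on $\P^\alpha(ug(\alpha)\le D_u\le t)$. The route you sketch --- tilt exactly at $\alpha=g^{-1}(x(u))$, invoke $\P^\alpha(D_u\ge\E^\alpha D_u)\ge c>0$ ``from general subordinator estimates,'' and control $\P^\alpha(D_u>t)$ by Chebyshev against the gap $t-ux(u)$ --- does not follow from the stated hypotheses. First, the proposition assumes only $sx(s)<t$, with no quantitative gap, so $t-ux(u)$ may be arbitrarily small and the Chebyshev bound on $\P^\alpha(D_u>t)$ is uncontrolled. Second, there is no universal positive lower bound on $\P(X\ge \E X)$ for infinitely divisible laws (heavily skewed examples make it arbitrarily small), and you would need such a bound uniformly over a regime in which both $u$ and the tilt $\alpha=g^{-1}(x(u))$ vary; this is precisely the technical content of Jain--Pruitt's Lemma 5.2(ii), which the paper invokes directly rather than reproving. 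Their argument avoids the one-sided issue by tilting at a slightly larger parameter, so the tilted mean lies strictly below $ux(u)$, at the cost of a factor $(1+2\eta)$ in the exponent, and controls the tilted variance via $-\alpha^2\psi''(\alpha)\le C\,R(\alpha)$, yielding $\P(D_u\le ux(u))\ge\bigl(1-\frac{(1+\eta)c}{\eta^2\,uR(g^{-1}(x(u)))}\bigr)e^{-(1+2\eta)uR(g^{-1}(x(u)))}$ for every $\eta>0$. Two ingredients of the paper's proof are then absent from your sketch: the decreasing hypothesis on $x$ is used only to make $s\mapsto R(g^{-1}(x(s)))$ nondecreasing, hence $sR(g^{-1}(x(s)))\to\infty$, so the prefactor is eventually at least $1/2$ (not to control a tilted variance, as you guess); and the slack $(1+2\eta)$ must be absorbed into the hypothesis $uR(g^{-1}(x(u)))\le(\lambda-\ve)u^{r-1}\cdot u$ by a specific choice of $\eta$ --- the paper takes $\eta=\ve^2/(2(\lambda^2-\ve^2))$ so that $(1+2\eta)(\lambda-\ve)=\lambda^2/(\lambda+\ve)<\lambda$, which is what makes the integral $\int u^{r-1}e^{\lambda u^r}\P(D_u<t)\,\dd u$ diverge. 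Without these steps your divergence conclusion does not go through; supplying them essentially amounts to reproving the Jain--Pruitt lemma that the paper cites.
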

\begin{proof}
(1) By the assumption and  Lemma 5.2(i) of \cite{JainPruitt}, 
\begin{align}\label{proof_tail1}
	\mathbb{P}(D_s<t)
	\le \mathbb{P}(D_s\le sx(s))
	\le e^{-sR(g^{-1}(x(s)))}
	\le e^{-(\lambda+\ve)s^r}
\end{align}
for all $s\ge M$. Note that 
\begin{align*}
	\mathbb{E}[e^{\lambda E^r_t}]
	&=\mathbb{E}[(e^{E^r_t})^\lambda]
	=\int_0^{e^{M^r}} \lambda z^{\lambda-1} \mathbb{P}(e^{E^r_t}>z)\,\dd z
		+\int_{e^{M^r}}^\infty \lambda z^{\lambda-1} \mathbb{P}(e^{E^r_t}>z)\,\dd z.
\end{align*}
Since the first integral on the right hand side is finite, whether the expectation exists or not is completely determined by the second integral, which is estimated with the help of \eqref{proof_tail1} 
and the change of variables $z=e^{s^r}$ as 
\begin{align*}
	\int_{e^{M^r}}^\infty \hspace{-4pt}\lambda z^{\lambda-1} \mathbb{P}(e^{E^r_t}>z)\,\dd z
	&=\int_{M}^\infty \lambda r s^{r-1} e^{\lambda s^r} \mathbb{P}(D_s<t)\,\dd s
	\le \int_{M}^\infty \lambda r s^{r-1} e^{-\ve s^r}\,\dd s.
\end{align*}
Since the latter integral is finite, it follows that $\mathbb{E}[e^{\lambda E^r_t}]<\infty$.

(2) By the assumption and Lemma 5.2(ii) of \cite{JainPruitt}, 
there exists a constant $c>0$ such that for all $\eta>0$ and $s\ge M$, 
\begin{align*}
	\mathbb{P}(D_s<t)
	&\ge \mathbb{P}(D_s\le sx(s))
	\ge \biggl(1-\dfrac{(1+\eta)c}{\eta^2 sR(g^{-1}(x(s)))}\biggr) e^{-(1+2\eta)s R(g^{-1}(x(s)))}.
\end{align*}
With the choice of $\eta=\ve^2/(2(\lambda^2-\ve^2))$, we can find a constant $M_1\ge M$ with
\[
	\dfrac{(1+\eta) c}{\eta^2 s R(g^{-1}(x(M_1)))}\le \dfrac 12
\]
for all $s\ge M_1$ (since the fraction on the left hand side goes to 0 as $s\to \infty$).
The assumption that $x(s)$ is decreasing together with the fact that $g(s)$ is also decreasing implies $g^{-1}(x(s))$ and $R(g^{-1}(x(s)))$ are both increasing. 
Consequently, by the identity $1+2\eta=\lambda^2/(\lambda^2-\ve^2)$ and the above estimates, it follows that $\mathbb{P}(D_s<t)\ge (1/2) e^{-\lambda^2 s^r/(\lambda+\ve)}$ for $s\ge M_1$.
This, along with the change of variables $z=e^{s^r}$, gives a lower bound for $\int_{e^{M_1^r}}^\infty \lambda z^{\lambda-1} \mathbb{P}(e^{E^r_t}>z)\,\dd z$ 
as
\begin{align*}
	\int_{M_1}^\infty \lambda rs^{r-1} e^{\lambda s^r} \mathbb{P}(D_s<t)\,\dd s
	\ge \dfrac{\lambda}{2}rM_1^{r-1} \int_{M_1}^\infty e^{\lambda \ve s^r/(\lambda+\ve)} \,\dd s=\infty.
\end{align*}
This implies that $\mathbb{E}[e^{\lambda E^r_t}]=\infty$.
\end{proof}
\begin{corollary}\label{Corollary_tail}
Let $E$ be the inverse of a subordinator $D$ with Laplace exponent $\psi$
 and infinite L\'evy measure $\nu$ 
in \eqref{def_LaplaceExponent}.
Let $g(s)$ and $R(s)$ be defined as in \eqref{def_g(s)}. Fix $\lambda>0$, $t>0$ and $r>0$. 
\begin{enumerate}[(1)]
\item If there exists a function $x(s)$ defined for large $s$ and taking values in the interval $(g(\infty), g(0))$ such that
$sx(s)\to \infty$ and  
$R(g^{-1}(x(s)))/s^{r-1}\to \infty$ as $s\to\infty$,
then 
$\mathbb{E}[e^{\lambda E^r_t}]<\infty$. 
\item If there exists a decreasing function $x(s)$ defined for large $s$ and taking values in the interval $(g(\infty), g(0))$ such that 
$sx(s)\to 0$ and  
$R(g^{-1}(x(s)))/s^{r-1}\to 0$ as $s\to\infty$,
then 
$\mathbb{E}[e^{\lambda E^r_t}]=\infty$. 
\end{enumerate}
\end{corollary}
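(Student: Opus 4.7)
The plan is to derive Corollary \ref{Corollary_tail} directly from Proposition \ref{Proposition_tail} by showing that the limit hypotheses on $x(s)$ furnish, for \emph{any} fixed $\lambda>0$ and $t>0$, the quantitative inequalities required in the proposition. No new probabilistic input is needed; the work is purely one of translating asymptotic statements into eventual inequalities.

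For part (1), I would fix $\lambda>0$ and $t>0$, and then pick any $\varepsilon>0$, say $\varepsilon=1$. Because $sx(s)\to\infty$, there is an $M_1>0$ with $sx(s)>t$ for all $s\ge M_1$. Because $R(g^{-1}(x(s)))/s^{r-1}\to\infty$, there is an $M_2>0$ with $R(g^{-1}(x(s)))/s^{r-1}\ge \lambda+\varepsilon$ for all $s\ge M_2$. Taking $M:=\max(M_1,M_2)$, both inequalities required in the hypothesis of Proposition \ref{Proposition_tail}(1) hold for all $s\ge M$, and $x(s)$ already takes values in $(g(\infty),g(0))$ by assumption. The conclusion $\mathbb{E}[e^{\lambda E_t^r}]<\infty$ is then immediate.

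For part (2), I would proceed analogously but now exploit the freedom to take $\varepsilon$ small. Fix $\lambda>0$ and set $\varepsilon:=\lambda/2>0$. Since $sx(s)\to 0$, there is $M_1$ with $sx(s)<t$ for $s\ge M_1$; since $R(g^{-1}(x(s)))/s^{r-1}\to 0$, there is $M_2$ with $R(g^{-1}(x(s)))/s^{r-1}\le \lambda/2=\lambda-\varepsilon$ for $s\ge M_2$. Setting $M:=\max(M_1,M_2)$, the decreasing function $x$ restricted to $[M,\infty)$ satisfies all the hypotheses of Proposition \ref{Proposition_tail}(2), giving $\mathbb{E}[e^{\lambda E_t^r}]=\infty$.

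There is no real obstacle here — the corollary is exactly the $\varepsilon\to 0$, $M\to\infty$ version of the proposition, and I only need to observe that such $\varepsilon$ and $M$ can be selected once the limit conditions are in force. The one point worth noting is that in part (2) the hypothesis that $x(s)$ is decreasing is preserved when restricting to $[M,\infty)$, so the monotonicity requirement in Proposition \ref{Proposition_tail}(2) is automatically inherited.
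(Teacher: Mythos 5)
Your proposal is correct and is exactly the argument the paper intends: the corollary is stated without proof precisely because the limit hypotheses immediately yield, for any fixed $\lambda$ and $t$, an $\varepsilon>0$ and an $M$ for which the eventual inequalities of Proposition \ref{Proposition_tail} hold (with the decreasing property of $x$ inherited on $[M,\infty)$ in part (2)). Nothing is missing.
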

We now apply Corollary \ref{Corollary_tail} and the following lemma (in Propositions 1.5.1 and 1.5.7 of \cite{Bingham_book}) to prove Theorem \ref{Theorem_general}. 
\begin{lemma}\label{lemma_RV} 
\begin{enumerate}[(1)] 
\item Given $f\in \mathrm{RV}_\alpha$, $f(\infty)=\infty$ if $\alpha>0$, and $f(\infty)= 0$ if $\alpha<0$. 
\item If $f_i\in \mathrm{RV}_{\alpha_i}$ for $i=1,2$ and $f_2(\infty)=\infty$, then $f_1\circ f_2\in \mathrm{RV}_{\alpha_1\alpha_2}$.
\item If $f_i\in \mathrm{RV}_{\alpha_i}$ for $i=1,2$, then $f_1\cdot f_2\in \mathrm{RV}_{\alpha_1+\alpha_2}$. 
\end{enumerate}
\end{lemma}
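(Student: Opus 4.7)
The plan is to treat the three parts in increasing order of difficulty, relying on two standard facts about slowly varying functions that we would either invoke or briefly prove as lemmas: the Representation Theorem (every $f\in\mathrm{RV}_\alpha$ can be written as $f(s)=s^\alpha\ell(s)$ with $\ell$ slowly varying) and the Uniform Convergence Theorem (for $f\in\mathrm{RV}_\alpha$ the limit $f(cs)/f(s)\to c^\alpha$ holds uniformly for $c$ in any compact subset of $(0,\infty)$). These are exactly what Bingham et al.\ provide in the cited propositions, and nothing beyond them is needed.

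Part (3) is immediate from the definition. For $c>0$, write
\[
\frac{(f_1\cdot f_2)(cs)}{(f_1\cdot f_2)(s)}=\frac{f_1(cs)}{f_1(s)}\cdot\frac{f_2(cs)}{f_2(s)}\longrightarrow c^{\alpha_1}\cdot c^{\alpha_2}=c^{\alpha_1+\alpha_2}
\]
as $s\to\infty$. No uniformity is used here.

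Part (1) I would handle via Potter-type bounds. Using the representation $f(s)=s^\alpha\ell(s)$, the uniform convergence of $\ell(cs)/\ell(s)$ to $1$ on compact $c$-intervals yields, for every $\delta>0$, constants $A,B>0$ and a threshold $s_0$ such that $B s^{-\delta}\le \ell(s)\le A s^\delta$ for all $s\ge s_0$. (This is the standard chaining argument: iterate $\ell(2s)/\ell(s)\to 1$ to control $\ell$ on dyadic intervals.) If $\alpha>0$, choose $\delta=\alpha/2$, giving $f(s)\ge B s^{\alpha/2}\to\infty$; if $\alpha<0$, choose $\delta=-\alpha/2$, giving $f(s)\le A s^{\alpha/2}\to 0$.

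Part (2) is the main content. Fix $c>0$ and set $u_s:=f_2(s)$ and $c_s:=f_2(cs)/f_2(s)$. By hypothesis $u_s\to\infty$ and $c_s\to c^{\alpha_2}=:c_\infty\in(0,\infty)$, so for large $s$ the values $c_s$ lie in a fixed compact set $K\subset(0,\infty)$ around $c_\infty$. Write
\[
\frac{f_1(f_2(cs))}{f_1(f_2(s))}=\frac{f_1(c_s u_s)}{f_1(u_s)}.
\]
By the Uniform Convergence Theorem applied to $f_1\in\mathrm{RV}_{\alpha_1}$, the ratio $f_1(c'u)/f_1(u)$ tends to $(c')^{\alpha_1}$ uniformly for $c'\in K$ as $u\to\infty$. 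Since $u_s\to\infty$, this uniform convergence lets us substitute $c'=c_s$ and conclude $f_1(c_s u_s)/f_1(u_s)-c_s^{\alpha_1}\to 0$, while $c_s^{\alpha_1}\to c_\infty^{\alpha_1}=c^{\alpha_1\alpha_2}$ by continuity of $c'\mapsto (c')^{\alpha_1}$. Combining yields the claim.

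The only genuine obstacle in this program is the Uniform Convergence Theorem used in part (2) (and implicitly in the Potter bound of part (1)); its proof requires a nontrivial measure-theoretic or topological argument (Egorov- or Baire-category-based) on the functional equation $\ell(cs)/\ell(s)\to 1$. Since the lemma is cited verbatim from Bingham et al., I would simply invoke it rather than reprove it here.
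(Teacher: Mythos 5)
Your proof is correct. The paper itself gives no argument for this lemma---it simply cites Propositions 1.5.1 and 1.5.7 of Bingham--Goldie--Teugels---and your sketch reproduces the standard proofs underlying that reference (direct computation for the product, Potter-type bounds from the representation for part (1), and the Uniform Convergence Theorem for the composition), so there is no substantive difference in approach.
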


\begin{proof}[Proof of Theorem \ref{Theorem_general}]
By Lemma \ref{lemma_RV}(1), $\psi(\infty)=\infty$ if $\beta\in(0,1)$. On the other hand, if $\beta=0$, by assumption, $\psi(\infty)=\nu(0,\infty)=\infty$. In any case, for any $c>0$, L'Hospital's rule gives
\[
	c^\beta
	=\lim_{s\to\infty} \frac{\psi(cs)}{\psi(s)}
	=c\lim_{s\to\infty} \frac{\psi'(cs)}{\psi'(s)}, \ \ \textrm{or} \ \ 
	\lim_{s\to\infty} \frac{\psi'(cs)}{\psi'(s)}=c^{\beta-1}, 
\]
so $\psi'\in \mathrm{RV}_{\beta-1}$. Moreover, since $\beta-1<0$, we have $\psi'(\infty)=0$ due to Lemma \ref{lemma_RV}(1), 
so it follows from L'Hospital's rule again that for any $c>0$, $\lim_{s\to\infty} \psi''(cs)/\psi''(s)=c^{\beta-2}$.
 Consequently, $-\psi''\in \mathrm{RV}_{\beta-2}$. (Note that $\psi''<0$ as $\psi$ is a Bernstein function.)

Letting $x(s):=g(s^r)=\psi'(s^r)$ and using Lemma \ref{lemma_RV}(2)(3) yields
\[
	sx(s)\in \mathrm{RV}_{(\beta-1)r+1},
\]
while
 $
 	R'(s)=\psi'(s)-(s\psi'(s))'=s(-\psi''(s))\in \mathrm{RV}_{\beta-1}.
$
Write $R'(s)$ as $s^{\beta-1}\ell(s)$ with a slowly varying function $\ell(s)$. If $\beta\in(0,1)$, then using Karamata's integral theorem (see Proposition 1.5.8 of \cite{Bingham_book}) yields 
\[
	R(s)=\int_0^s R'(r)\,\dd r=\int_0^s r^{\beta-1} \ell(r)\,\dd r \sim \ell(s)\int_0^s r^{\beta-1} \,\dd r= \frac{s^\beta \ell(s)}{\beta}
\]
as $s\to \infty$, so $R\in \mathrm{RV}_{\beta}$. 
On the other hand, if $\beta=0$, then $R\in \mathrm{RV}_0$ by Proposition 1.5.9a of \cite{Bingham_book}.
Thus, $R\in \mathrm{RV}_{\beta}$ regardless of the value of $\beta$. Hence,
\[
	\frac{R(g^{-1}(x(s)))}{s^{r-1}}=\frac{R(s^r)}{s^{r-1}}\in \mathrm{RV}_{\beta r -(r-1)}=\mathrm{RV}_{(\beta-1)r+1}.
\]
Again, by Lemma \ref{lemma_RV}(1),  
the two quantities $sx(s)$ and $R(g^{-1}(x(s)))/s^{r-1}$ both increase to $\infty$ if $(\beta-1)r+1>0$ and both decrease to $0$ if $(\beta-1)r+1<0$.  
Application of Corollary \ref{Corollary_tail} 
completes the proof. 
\end{proof}

\begin{example}\label{Example_RV}
\begin{em}
Fix $\lambda>0$ and $t>0$. 

(1) If $\psi(s)=(s+\kappa)^\beta-\kappa^\beta$ with $\beta\in(0,1)$ and $\kappa\ge 0$, then $\psi\in \mathrm{RV}_\beta$ regardless of the value of $\kappa$, so $\mathbb{E}[e^{\lambda E^r_t}]<\infty$ as long as $r<1/(1-\beta)$. This particularly implies that 
if the subordinator $D$ is stable or tempered stable with stability index $\beta\in(1/2,1)$, then $\mathbb{E}[e^{\lambda E^{2}_t}]<\infty$.
This fact will be used in the proof of Theorem \ref{Theorem_with_noise}. 

(2) If $\psi\in \mathrm{RV}_0$
 and $\psi(\infty)=\infty$ (e.g.\ $\psi(s)=\log(1+s)$, which corresponds to a Gamma subordinator $D$), then $\mathbb{E}[e^{\lambda E^r_t}]<\infty$ for any $0<r\le 1$, and $\mathbb{E}[e^{\lambda E^r_t}]=\infty$ for any $r>1$.

(3) The subordinator $D$ with Laplace exponent
	$\psi(s)=\int_0^1 s^\beta \,\rho(\dd \beta),$
where $\rho$ is a finite Borel measure on $(0,1)$ with $\mathrm{supp}(\rho)\subset (0,1)$, is regarded as a mixture of independent stable subordinators. 
 Its inverse $E$ can be used as a time change introducing more than one subdiffusive mode. In particular, 
Theorems 3.5 and 3.6 of \cite{HKU-1} establish that a class of SDEs driven by a time-changed L\'evy process with this particular time change is associated with a class of time-distributed fractional-order pseudo-differential equations. 
For specific applications where several subdiffusive modes appear, see e.g.\ \cite{GhoshWebb}. 
If $\rho=\sum_{j=1}^J a_j\delta_{\beta_j}$, where for each $j$, 
$a_j>0$
and $\delta_{\beta_j}$ is a Dirac measure with mass at $\beta_j\in(0,1)$, then 
since $\psi\in \mathrm{RV}_{\hat{\beta}}$ with $\hat{\beta}:=\max_{1\le j\le J}\beta_j$, it follows from Theorem \ref{Theorem_general} that $\mathbb{E}[e^{\lambda E^r_t}]<\infty$ for $0<r<1/(1-\hat{\beta})$, and $\mathbb{E}[e^{\lambda E^r_t}]=\infty$ for $r>1/(1-\hat{\beta})$. 
\end{em}
\end{example}

\begin{remark}\label{Remark_general}
\begin{em}
(1) Corollary \ref{Corollary_tail} can be possibly applied to subordinators whose Laplace exponents are regularly varying at $\infty$ with index $1$. For example, suppose $\widehat{\psi}(s):=as+\psi(s)$, 
where $a>0$ and $\psi\in \mathrm{RV}_\beta$
with $\beta\in[0,1)$ as in Theorem \ref{Theorem_general}. 
Then $\widehat{\psi}\in \mathrm{RV}_1$, $\widehat{\psi}'=a+\psi'\in \mathrm{RV}_0$, and $-\widehat{\psi}''=-\psi''\in \mathrm{RV}_{\beta-2}$. Consequently, letting $x(s):=g(s^r)$ yields $sx(s)\in \mathrm{RV}_1$ and $R(g^{-1}(x(s)))/s^{r-1}\in \mathrm{RV}_{(\beta-1)r+1}$, so both quantities go to $\infty$ as $s\to \infty$ if $(\beta-1)r+1>0$.
Thus, application of Corollary \ref{Corollary_tail} yields $\mathbb{E}[e^{\lambda E^r_t}]<\infty$  for  $r<1/(1-\beta)$. 
This implies that Theorem \ref{Theorem_with_noise} is applicable to subordinators with such Laplace exponents $\widehat{\psi}\in \mathrm{RV}_1$.

(2) If $D$ is stable with index $\beta\in(0,1)$, Theorem \ref{Theorem_general} can be obtained immediately from the known result about the moments of $E_t$ (see e.g.\ Corollary 3.1 of \cite{MS_1} and Proposition 5.6 of \cite{HKU-book}) together with the ratio test. Indeed, 
\begin{align*}
	\mathbb{E}[e^{\lambda E_t^r}]
	=\sum_{n=0}^\infty \dfrac{\lambda^n\mathbb{E}[E_t^{rn}]}{n!}
	=\sum_{n=0}^\infty\dfrac{\lambda^n}{n!}\dfrac{\Gamma(rn+1)}{\Gamma(rn\beta+1)}t^{rn\beta}
	=f(\lambda t^{r\beta}),
\end{align*}
where $f(z):=\sum_{n=0}^\infty a_n z^n$ with $a_n:=\Gamma(rn+1)/(n!\Gamma(rn\beta+1))$.
By Stirling's formula, as $n\to\infty$,
\begin{align*}
	\dfrac{a_{n+1}}{a_n}
	&=\frac{1}{n+1}\cdot \dfrac{\Gamma(rn+r+1)}{\Gamma(rn+1)}\cdot 
		\dfrac{\Gamma(rn\beta+1)}{\Gamma(r(n+1)\beta+1)}\\
	&\sim \frac{1}{n+1}\cdot \dfrac{(rn+r)^{rn+r+1/2} e^{-(rn+r)}}{(rn)^{rn+1/2}e^{-rn}}\cdot 
		\dfrac{(rn\beta)^{rn\beta+1/2}e^{-rn\beta}}
		{(r(n+1)\beta)^{r(n+1)\beta+1/2}e^{-r(n+1)\beta}}\\
	&=\frac{1}{n+1}\cdot \Bigl(\dfrac{n+1}{n}\Bigr)^{rn+1/2}\cdot \frac{(r(n+1))^{r}}{e^r}\cdot 
		\Bigl(\dfrac{n}{n+1}\Bigr)^{rn\beta+1/2}\cdot \dfrac{e^{r\beta}}{(r(n+1)\beta)^{r\beta}}\\
	&\sim \frac{1}{n+1}\cdot \dfrac{(r(n+1))^r}{(r(n+1)\beta)^{r\beta}}\\
	&\longrightarrow \begin{cases} 
						0 &\textrm{if} \  \ r<r\beta+1,\\ 
						\infty &\textrm{if} \ \ r>r\beta+1,\\
						r^r/(r-1)^{r-1} &\textrm{if} \ \ r=r\beta+1. 
					\end{cases}
\end{align*}
This yields Theorem \ref{Theorem_general}.
It also follows that in the threshold case when $r=1/(1-\beta)$, 
$\mathbb{E}[e^{\lambda E_t^r}]<\infty$ if $\lambda t^{r-1}<(r-1)^{r-1}/r^r$, while $\mathbb{E}[e^{\lambda E_t^r}]=\infty$ if $\lambda t^{r-1}>(r-1)^{r-1}/r^r$. This can also be verified using Proposition \ref{Proposition_tail}.
\end{em}
\end{remark}

\section{Approximation of SDEs with space-time-dependent coefficients}
\label{section_approximation}

Suppose the probability space $(\Omega,\F,\P)$ is equipped with a filtration $(\F_t)_{t\ge 0}$ satisfying the usual conditions. 
Let $B$ be an $m$-dimensional $(\F_t)$-adapted Brownian motion which is independent of an $(\F_t)$-adapted subordinator $D$ with infinite L\'evy measure.
Let $E$ be the inverse of $D$. 
Consider the SDE
\begin{align}\label{SDE_001}	
	X_t=x_0+\int_0^t F(s,X_s)\, \dd E_s+\int_0^t G(s,X_s)\,\dd B_{E_s} \ \ \textrm{for} \ t\in[0,T],
\end{align}
where $x_0\in\mathbb{R}^d$ is a non-random constant, $T>0$ is a fixed time horizon, and $F(t,x):[0,T]\times \mathbb{R}^d\to \mathbb{R}^d$ and $G(t,x):[0,T]\times \mathbb{R}^d\to \mathbb{R}^{d\times m}$ are measurable functions for which there exist constants $K>0$, ${\theta_F}\in(0,1]$ and $\theta_G\in(0,1]$ such that
\begin{align}
	\label{SDE_condition1} &|F(t,x)-F(t,y)|+|G(t,x)-G(t,y)|\le K|x-y|,\\
	\label{SDE_condition2} &|F(t,x)|+|G(t,x)|\le K(1+|x|),\\
	\label{SDE_condition3} &|F(s,x)-F(t,x)|\le K(1+|x|)|s-t|^{\theta_F},\\
	\label{SDE_condition4} &|G(s,x)-G(t,x)|\le K(1+|x|)|s-t|^{\theta_G}
\end{align}  
for all $x,y\in\mathbb{R}^d$ and $s,t\in [0,T]$, with $|\cdot|$ denoting the Euclidean norms of appropriate dimensions. \textit{In the remainder of the paper, we assume that $m=d=1$ for simplicity of discussions and expressions;} an extension to a multidimensional case is straightforward.  
For each fixed $t\ge 0$ the random time $E_t$ is an $(\F_t)$-stopping time, and therefore, the time-changed filtration $(\F_{E_t})_{t\ge 0}$ is well-defined. Moreover, since the time change $E$ is an $(\F_{E_t})$-adapted nondecreasing process and the time-changed Brownian motion $B\circ E=(B_{E_t})_{t\ge 0}$ is an $(\F_{E_t})$-martingale, 
SDE \eqref{SDE_001} is understood within the framework of stochastic integrals driven by semimartingales (see Corollary 10.12 of \cite{Jacod}; also see \cite{Kobayashi} for details).
Conditions \eqref{SDE_condition1}--\eqref{SDE_condition2} guarantee the existence of a unique strong solution of SDE \eqref{SDE_001} which is $(\F_{E_t})$-adapted. 
Conditions \eqref{SDE_condition3}--\eqref{SDE_condition4} are required to obtain strong convergence of our approximation scheme in Theorem \ref{Theorem_with_noise}. Note that in the classical setting of an It\^o SDE (i.e.\ SDE \eqref{SDE_001} with $E_t\equiv t$), the corresponding theorem for strong approximation usually assumes \eqref{SDE_condition3}--\eqref{SDE_condition4} with ${\theta_F}={\theta_G}=1/2$ (see Theorem 10.2.2 of \cite{KloedenPlaten}). Note also that we exclude cases when ${\theta_F}>1$ and/or ${\theta_G}>1$ since that would imply $F$ and/or $G$ must be independent of $t$ (i.e.\ $F(t,x)=F(x)$ and $G(t,x)=G(x)$).

As noted in Section \ref{section_introduction}, a standard conditioning approach used in \cite{JumKobayashi} based on the duality principle in \cite{Kobayashi} no longer works for SDE \eqref{SDE_001}. 
Our argument in this paper is different.
 We do not rely on the duality principle. 
Instead, we utilize a Gronwall-type inequality involving a stochastic driver to control the moment of the error process.
 Moreover, Theorem \ref{Theorem_general} established in Section \ref{section_inverse} will be used to guarantee that the error bound to be ultimately derived in the proof of Theorem \ref{Theorem_with_noise} is meaningful.

Fix an equidistant step size $\delta\in (0,1)$ and a time horizon $T>0$.
To approximate an inverse subordinator $E$ on the interval $[0,T]$, 
we follow the idea presented in \cite{Magdziarz_simulation,Magdziarz_spa}. Namely, we first simulate a sample path of the subordinator $D$, which has independent and stationary increments, by setting  $D_0=0$ and then following the rule $D_{i\delta}:=D_{(i-1)\delta}+Z_i$, $i=1,2,3,\ldots,$
with an i.i.d.\ sequence $\{Z_i\}_{i\in\mathbb{N}}$ distributed as $Z_i=^\mathrm{d} D_{\delta}$. 
We stop this procedure upon finding the integer $N$ satisfying 
	$T\in[D_{N\delta}, D_{(N+1)\delta}).$
 Note that the $\mathbb{N}\cup\{0 \}$-valued random variable $N$ indeed exists since  
 $D_t\to\infty$ as $t\to\infty$ a.s. 
To generate the random variables $\{Z_i\}$, one can use algorithms presented in Chapter  6 of \cite{ContTankov}. 
Next, let
\begin{align}\label{def_Spsidelta}
	E^\delta_t
	:=\bigl(\min\{n\in \mathbb{N}; D_{n\delta}>t\}-1\bigr)\delta, \ \ t\in[0,T].
\end{align}
The sample paths of $E^\delta=(E^\delta_t)_{t\ge 0}$ are nondecreasing step functions with constant jump size $\delta$ and the $i$th waiting time given by $Z_i=D_{i\delta}-D_{(i-1)\delta}$. Indeed, it is easy to see that for $n=0,1,2,\ldots,N$,
\begin{align}\label{property_Spsidelta}
	E^\delta_t=n\delta \ \ \textrm{whenever} \ \ t\in[D_{n\delta},D_{(n+1)\delta}).
\end{align}
In particular, 
$E^\delta_T=N\delta.$
The process $E^\delta$ efficiently approximates $E$;  
indeed, a.s., 
\begin{align}\label{ineq_Spsi}
	E_t-\delta\le E^\delta_t\le E_t \ \ \textrm{for all} \ \  t\in[0,T]. 
\end{align}
For proofs, see \cite{JumKobayashi,Magdziarz_spa}.

Now, let 
\[
	\tau_n=D_{n\delta} \ \ \textrm{for} \ \ n=0,1,2,\ldots,N
\]
and let
\[
	n_t=\max\{n\in \mathbb{N}\cup \{0\}; \tau_n\le t\} \ \ \textrm{for} \ \ t\ge 0.
\]
By the independence assumption between $B$ and $D$, we can approximate the Brownian motion $B$ over the time steps $\{0,\delta,2\delta,\ldots,N\delta\}$, independently of $D$.
Define a discrete-time process $(X^\delta_{\tau_n})_{n\in \{0,1,2,\ldots,N\}}$ by setting 
\begin{align}
	&X^\delta_0
	=x_0\label{Approx_001},\\
	&X^\delta_{\tau_{n+1}}\!\!
	=X^\delta_{\tau_{n}}+F(\tau_n,X^\delta_{\tau_n})\delta+G(\tau_n,X^\delta_{\tau_n})(B_{(n+1)\delta}-B_{n\delta})
	\label{Approx_002}
\end{align}
for $n=0,1,2,\ldots,N-1$.
 Define a continuous-time process $X^\delta=(X^\delta_t)_{t\in[0,T]}$ by piecewise constant interpolation
\begin{align}
	X^\delta_t=X^\delta_{\tau_{n_t}}.\label{Approx_003}
\end{align}
Note that any time point $t\ge 0$ satisfies 
\begin{align}\label{Approx_051}
	\tau_{n_t}\le t<\tau_{n_t+1}
\end{align}
 and that sample paths of $X^\delta$ and $E^\delta$ are both constant over any interval of the form $[\tau_n,\tau_{n+1})$. 
Figure \ref{figure_002} presents a simulation of sample paths of $E$ and $X$ based on this approximation scheme, where the time component of the external force term is taken to be sinusoidal as in e.g.\ \cite{SokolovKlafter2006}.

\begin{figure}
    \centering
    \includegraphics[width=3.8in]{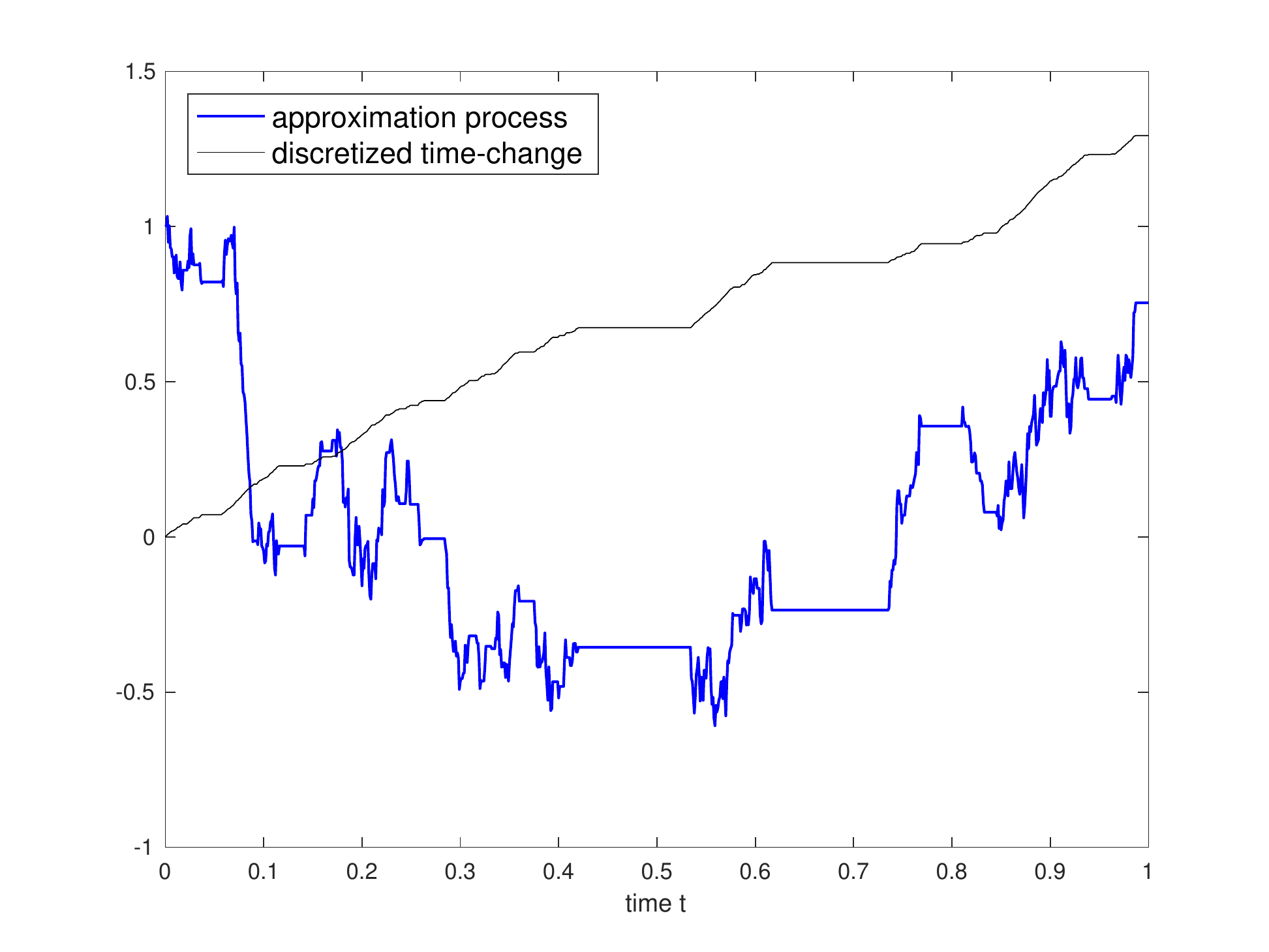}
    \caption{Sample paths of an inverse $0.85$-stable subordinator $E$ (black) and the corresponding solution $X$  (blue) of SDE $X_t=1+\int_0^t (\sin s)X_s\,\dd E_s+B_{E_t}$ on the time interval $[0,1]$.}
    \label{figure_002}
\end{figure}

We now state the main theorem of this paper, which gives the rate of strong convergence of the approximation scheme for SDE \eqref{SDE_001}.
Recall that 
an approximation process $X^\delta$ with step size $\delta>0$ is said to \textit{converge strongly to the solution $X$ uniformly on $[0,T]$ with order $\eta\in(0,\infty)$} 
if there exist finite positive constants $C$ and $\delta_0$ such that 
$
	\mathbb{E}\left[\sup_{0\le t\le T}|X_t-X^\delta_t|\right]\le C\delta^\eta
$
for all $\delta\in(0,\delta_0)$.

\begin{theorem}\label{Theorem_with_noise}
Let $E$ be the inverse of a subordinator $D$ with Laplace exponent $\psi$ and infinite L\'evy measure.
Let $B$ be Brownian motion independent of $D$.
Let $X^\delta$ be the approximation process defined in \eqref{Approx_001}--\eqref{Approx_003} for the exact solution $X$ of SDE \eqref{SDE_001}, where the coefficients $F(t,x)$ and $G(t,x)$ satisfy conditions \eqref{SDE_condition1}--\eqref{SDE_condition4}. 
Suppose further that at least one of the following conditions holds:
\begin{enumerate}
\item[\em{(i)}] $\psi$ is regularly varying at $\infty$ with index $\beta\in(1/2,1)$;
\item[\em{(ii)}] $G(t,x)=G(t)$ for all $(t,x)\in[0,T]\times \R$.
\end{enumerate}
Then for any $\varepsilon\in(0,1/2)$, there exist constants $C\in(0,\infty)$ (not depending on $\delta$) and $\delta_0=\delta_0(\varepsilon)\in(0,1)$ such that
\[
 	\E\left[ \sup_{0\le t\le T}|X_t-X^\delta_t|\right]\le C(\delta^{\theta_F}+\delta^{\theta_G}+\delta^{1/2-\varepsilon})
\]
 for all $\delta\in(0,\delta_0)$. Thus, $X^\delta$ converges strongly to $X$ uniformly on  $[0,T]$ with order 
$\min({\theta_F},\theta_G,1/2-\varepsilon)$.
\end{theorem}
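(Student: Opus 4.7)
My plan is to compare the exact solution $X$ with a continuous-in-$t$ interpolation of the discrete scheme $X^\delta$, and to close a recursive moment inequality for the discrepancy via a stochastic Gronwall argument whose exponential driver is rendered integrable by Theorem~\ref{Theorem_general}. I would introduce the intermediate process
\[
 \bar X^\delta_t := x_0 + \int_0^t F(\tau_{n_s},X^\delta_s)\,\dd E_s + \int_0^t G(\tau_{n_s},X^\delta_s)\,\dd B_{E_s},
\]
which coincides with $X^\delta$ at each grid point $\tau_n$ but evolves continuously in $t$ between them. On a single cell $[\tau_n,\tau_{n+1})$, the difference $\bar X^\delta - X^\delta$ amounts to a single Euler step of $E$-time length at most $\delta$ and Brownian variance at most $\delta$; combined with the linear-growth bound \eqref{SDE_condition2}, a routine discrete-Gronwall moment estimate for $\max_n|X^\delta_{\tau_n}|^p$, and the standard sup bound for $N\approx E_T/\delta$ Brownian increments, this gives $\E\bigl[\sup_{t \le T}|\bar X^\delta_t - X^\delta_t|\bigr] \le C\delta^{1/2-\varepsilon}$, the $\varepsilon$ loss absorbing a logarithmic factor in $N$.

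Next, subtracting the SDEs for $X$ and $\bar X^\delta$ and splitting each integrand via
\begin{align*}
 F(s,X_s) - F(\tau_{n_s},X^\delta_s) &= \bigl[F(s,X_s) - F(s,\bar X^\delta_s)\bigr] + \bigl[F(s,\bar X^\delta_s) - F(s,X^\delta_s)\bigr] \\
 &\quad + \bigl[F(s,X^\delta_s) - F(\tau_{n_s},X^\delta_s)\bigr]
\end{align*}
(and analogously for $G$), conditions \eqref{SDE_condition1}, \eqref{SDE_condition3}, \eqref{SDE_condition4} bound these three pieces by $K|X_s - \bar X^\delta_s|$, $K|\bar X^\delta_s - X^\delta_s|$, and $K(1+|X^\delta_s|)|s-\tau_{n_s}|^{\theta_F}$, respectively. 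Raising to a power $p\ge 2$, applying Burkholder--Davis--Gundy to the stochastic integral (whose quadratic variation is $E$, since $B\circ E$ is an $(\F_{E_t})$-martingale), and using Jensen to pull $\dd E_s$ outside of a power yields a recursion of the schematic form
\[
 H(t) \le A(t) + C\int_0^t H(s)\,\dd E_s,
\]
with $H(t):=\E\bigl[\sup_{r\le t}|X_r-\bar X^\delta_r|^p\bigr]$ and $A(t)$ collecting the interpolation and temporal-H\"older contributions. Pathwise Gronwall applied inside a given realization of $E$, followed by Cauchy--Schwarz to decouple, produces a factor of the form $\E[e^{CE_T^r}]^{1/2}$ for some $r\ge 1$ dictated by the BDG exponent. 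Under hypothesis (i) with $\beta\in(1/2,1)$, Theorem~\ref{Theorem_general}(1) furnishes $\E[e^{CE_T^r}]<\infty$ for all $r<1/(1-\beta)$, which is enough to cover the ranges of $r$ appearing here; under hypothesis (ii) the $G$-coefficient is free of $X$, so the BDG step never introduces a self-interaction in $H$, and only $\E[e^{CE_T}]$ (always finite since $\nu$ is infinite) is needed.

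The main obstacle is estimating the temporal-H\"older piece absorbed into $A(T)$, because the inter-grid gap $|s-\tau_{n_s}|$ is a waiting time $Z_{n_s+1}$ of $D$ and is not bounded by $\delta$; for heavy-tailed $D$ it can even fail to have moments of the relevant order. The essential observation is that this contribution appears under $\dd E_s$, so the change of variables $u=E_s$ converts $\int_{\tau_n}^{\tau_{n+1}}|s-\tau_n|^{p\theta_F}\,\dd E_s$ into $\int_{n\delta}^{(n+1)\delta}|D_u-D_{n\delta}|^{p\theta_F}\,\dd u$, which by stationary increments of $D$ reduces to a moment of $D$ over a deterministic interval of length $\delta$. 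After H\"older's inequality against the Gronwall exponential --- whose integrability is exactly what Theorem~\ref{Theorem_general} supplies --- this contribution is driven down to $O(\delta^{\theta_F})$, and analogously to $O(\delta^{\theta_G})$ for the $G$-term. Combining the three sources of error yields the rate $\min(\theta_F,\theta_G,1/2-\varepsilon)$; the strict suboptimality of $1/2-\varepsilon$ (compared with the classical $1/2$ for It\^o SDEs without a time change) is a joint effect of the Brownian supremum over the random number of cells and the interplay with the Gronwall exponential, as elaborated in Remark~\ref{Remark_with_noise}(3).
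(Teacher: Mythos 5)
There is a genuine gap at the very point where the paper's main difficulty lies: your treatment of the temporal H\"older term. You propose to control $\int_{\tau_n}^{\tau_{n+1}}|s-\tau_n|^{p\theta_F}\,\dd E_s$ by the change of variables $u=E_s$, obtaining $\int_{n\delta}^{(n+1)\delta}|D_{u}-D_{n\delta}|^{p\theta_F}\,\dd u$, and then to "reduce to a moment of $D$ over a deterministic interval of length $\delta$." But for the subordinators the theorem is designed for, that moment typically does not exist: for a $\beta$-stable (or tempered-stable-type heavy-tailed) subordinator, $\E[D_v^q]=\infty$ whenever $q\ge\beta$, and under hypothesis (i) one only has $\beta\in(1/2,1)$ while $\theta_F\in(0,1]$ and $p\ge 2$ are arbitrary, so $p\theta_F\ge\beta$ (indeed even $\theta_F\ge\beta$) is entirely possible. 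H\"older against the Gronwall exponential cannot rescue this, since the divergence is in the $D$-increment itself, not in the coupling with $E_T$. The waiting times $\tau_{i+1}-\tau_i$ are genuinely unbounded and heavy-tailed; the paper circumvents this \emph{pathwise}, without any moment of $D$: it bounds $(r-\tau_{n_r})^{\theta_F}\le(\tau_{n_r+1}-\tau_{n_r})^{\theta_F}$, uses the identity $\int_{\tau_i}^{\tau_{i+1}}(\tau_{n_r+1}-\tau_{n_r})^{\theta_F}\,\dd E_r=\delta(\tau_{i+1}-\tau_i)^{\theta_F}$, and then applies Jensen's inequality to get $\sum_{i<n_t}(\tau_{i+1}-\tau_i)^{\theta_F}\le n_t^{1-\theta_F}\tau_{n_t}^{\theta_F}\le(E_T/\delta)^{1-\theta_F}T^{\theta_F}$, so the whole contribution is $O(\delta^{\theta_F}E_T^{1-\theta_F})$ almost surely, and only exponential moments of $E_T$ (which always exist, or exist for powers via Theorem \ref{Theorem_general}) are ever needed. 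Without this (or an equivalent) device your error term $A(T)$ is not finite in general, and the claimed rate $O(\delta^{\theta_F})$, $O(\delta^{\theta_G})$ does not follow.

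Two secondary points. First, your "raise to a power $p\ge2$" is too cavalier: after the Gronwall/Cauchy--Schwarz decoupling the exponential driver is of the form $e^{CE_T^{p}}$ when $G$ depends on $x$ (the factor $E_T^{p-1}$ produced by H\"older on $(\int_0^t\cdot\,\dd E_r)^p$ multiplies $E_T$ in the exponent), and Theorem \ref{Theorem_general} only gives finiteness for powers strictly below $1/(1-\beta)$, which for $\beta$ near $1/2$ forces $p=2$; this is exactly why the paper works with the squared error and why condition (i) reads $\beta\in(1/2,1)$ (cf.\ Remark \ref{Remark_with_noise}(2)). Second, your intermediate continuous interpolation $\bar X^\delta$ is a legitimate variant of the paper's direct decomposition into $I_1,\dots,I_4$ (it freezes the coefficients on the overshoot piece, so a modulus-of-continuity bound for Brownian increments on $[0,E_T]$ plus moments of $\max_n|X^\delta_{\tau_n}|$ replaces the paper's appeal to the Fischer--Nappo modulus-of-continuity estimate for the It\^o integral), but note that your splitting puts the factor $(1+|X^\delta_s|)$ on the temporal term, so you would additionally need a discrete-Gronwall moment bound for the scheme itself, which the paper avoids by placing that factor on $X$ and invoking Lemma \ref{lemma_Holder}. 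These are repairable; the handling of the asynchrony term is the step that must be replaced.
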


The proof of Theorem \ref{Theorem_with_noise} is based on the following lemma.

\begin{lemma}\label{lemma_Holder}
Let $E$ be the inverse of a subordinator $D$ with infinite L\'evy measure. 
Let $B$ be Brownian motion independent of $D$.
Let $X$ be the solution of SDE \eqref{SDE_001}, where the coefficients $F(t,x)$ and $G(t,x)$ satisfy conditions \eqref{SDE_condition1}--\eqref{SDE_condition2}. 
For a fixed $p\in[1,\infty)$, let $Y^{(p)}_T:=1+\sup_{0\le s\le T} |X_s|^p$.     
Then $\E[Y^{(p)}_T]<\infty$.
\end{lemma}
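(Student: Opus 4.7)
The plan is to apply the time-changed It\^o formula from \cite{Kobayashi} to a power of $(1+X_t^2)$ and then, crucially, to condition on $\F^E:=\sigma(E_s:s\ge 0)$ so that the random integrator $\dd E_s$ becomes deterministic. This brings the resulting inequality into a form where a classical (deterministic) Gronwall inequality applies pathwise, and the bound $\E[e^{\lambda E_T}]<\infty$ recalled at the start of Section \ref{section_inverse} (a special case of Theorem \ref{Theorem_general}) closes the argument. The duality principle is not needed.

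By monotonicity of $L^p$-norms it suffices to treat $p=2q$ with $q\in\mathbb{N}$. Set $f(x):=(1+x^2)^q$. The linear growth condition \eqref{SDE_condition2} gives the pointwise bound $|f'(x)F(t,x)|+|f''(x)G(t,x)^2|\le C_q f(x)$. Applying the time-changed It\^o formula to $f(X_t)$ and using this estimate yields
\[
f(X_t)\le f(x_0)+C_q\int_0^t f(X_s)\,\dd E_s+M_t,\qquad M_t:=\int_0^t f'(X_s)G(s,X_s)\,\dd B_{E_s},
\]
where $M$ is a local martingale with respect to $(\F_{E_t})$. I would then localize at $\tau_R:=\inf\{t\in[0,T]:|X_t|\ge R\}\wedge T$; by continuity of $X$, $\tau_R\uparrow T$ almost surely, and $M^{\tau_R}$ is a bounded true martingale.

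Next, condition on $\F^E$. Since $B$ and $D$ are independent, conditional on $\F^E$ the time-changed Brownian motion $B\circ E$ is a Gaussian martingale with now deterministic quadratic variation $E_\cdot$, and so $\E[M^{\tau_R}_t\mid \F^E]=0$. Combined with $f(X_s)\mathbf{1}_{\{s\le\tau_R\}}\le f(X_{s\wedge\tau_R})$ and the fact that $\dd E_s$ is $\F^E$-measurable (allowing conditional Tonelli on the nonnegative integrand), this yields, for almost every $\omega$,
\[
u(t)\le f(x_0)+C_q\int_0^t u(s)\,\dd E_s,\qquad u(t):=\E[f(X_{t\wedge\tau_R})\mid\F^E].
\]
Since $E_\cdot(\omega)$ is pathwise continuous and nondecreasing, the classical Gronwall inequality in Stieltjes form gives $u(t)\le f(x_0)\hspace{1pt}e^{C_q E_t}$, so that $\E[f(X_{t\wedge\tau_R})]\le f(x_0)\,\E[e^{C_q E_T}]<\infty$ uniformly in $R$.

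To upgrade to a supremum bound, I would take $\sup_{t\le T}$ in the It\^o-derived inequality and apply the Burkholder--Davis--Gundy inequality to $M^{\tau_R}$. Using $(f'(x)G(t,x))^2\le C f(x)^2$ together with $\int f^2\,\dd E\le (\sup f)\int f\,\dd E$ and Young's inequality $\sqrt{ab}\le \varepsilon a+b/(4\varepsilon)$, the $\E[\sup_t f(X_{t\wedge\tau_R})]$ term arising on the right can be absorbed into the left. Together with the preceding bound on $\E[\int f\,\dd E]$ this produces $\E[\sup_{t\le T}f(X_{t\wedge\tau_R})]\le C\,\E[e^{C_q E_T}]$ uniformly in $R$, and monotone convergence as $R\to\infty$ gives $\E[\sup_{t\le T}f(X_t)]<\infty$; since $|x|^p\le f(x)$, the lemma follows. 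The main obstacle is the conditioning step: because $X$ depends on both $B$ and $E$, directly applying a deterministic Gronwall to $\E[\int_0^t f(X_s)\,\dd E_s]$ fails due to the randomness of $\dd E_s$, and conditioning on $\F^E$ is what enables the pathwise Gronwall while simultaneously preserving the martingale property of $M^{\tau_R}$.
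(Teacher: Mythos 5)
Your proposal is correct and follows essentially the same route as the paper: conditioning on $\sigma(E)$ is precisely the paper's product-measure device ($\E_B$ for a frozen path of $D$), after which the argument is the same combination of the time-changed It\^o formula, localization, a Stieltjes--Gronwall inequality driven by the now-deterministic $E$, the Burkholder--Davis--Gundy inequality with absorption of the supremum term, and the exponential moment $\E[e^{\lambda E_T}]<\infty$ to close. The remaining differences are organizational rather than substantive (the smooth test function $(1+x^2)^q$ in place of $|x|^p$ with $p\ge 2$, the stopping times $\tau_R$ in place of $S_\ell$, and a two-pass rather than one-pass treatment of the supremum), and your parenthetical claim that $M^{\tau_R}$ is bounded is inaccurate but harmless: its quadratic variation involves $E_T$ and is only bounded conditionally on $\sigma(E)$, which is all that is needed for the zero conditional mean you use.
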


To prove the lemma, 
let us recall two inequalities.
First, for any $p\in[1,\infty)$, the inequality 
\[
	(x+y+z)^p\le c_p (x^p+y^p+z^p)
\]
is valid for all $x,y,z\ge 0$, 
where $c_p=3^{p-1}$. 
Second, the Burkholder--Davis--Gundy inequality states that for any $p>0$, there exists a constant $b_p>0$ such that 
\begin{align}\label{Burkholder}
	\E\left[ \sup_{0\le t\le S} |M_t|^p\right]\le b_p \E\left[ [M,M]_S^{p/2}\right]
\end{align}
for any stopping time $S$ and any continuous local martingale $M$ with quadratic variation $[M,M]$. The constant $b_p$ can be taken independently of $S$ and $M$; see Proposition 3.26 and Theorem 3.28 of Chapter 3 of \cite{KaratzasShreve}.

Since the Brownian motion $B$ and the subordinator $D$ are assumed independent, it is possible to set up $B$ and $D$ on a product space with product measure $\P=\P_B\times \P_D$ with obvious notation. We use this set up in the proofs of Lemma \ref{lemma_Holder} and Theorem \ref{Theorem_with_noise} below. Let $\E_B$, $\E_D$ and $\E$ denote the expectations under the probability measures $\P_B$, $\P_D$ and $\P$, respectively.

\begin{proof}[Proof of Lemma \ref{lemma_Holder}]
It suffices to prove the statement for $p\ge 2$ since the result for $1\le p<2$ follows immediately from the result for $p\ge 2$ with Jensen's inequality. 
Fix $p\ge 2$ and let $Y^{(p)}_t:=1+\sup_{0\le s\le t} |X_s|^p$ for $t\in[0,T]$. 
 Let $S_\ell:=\inf \{t\ge 0; Y^{(p)}_t>\ell\}$ for $\ell\in\mathbb{N}$. 
Since the solution $X$ has continuous paths, $Y^{(p)}_t<\infty$, and hence, $S_\ell\uparrow \infty$ as $\ell\to \infty$.
The idea of the proof is to first apply a Gronwall-type inequality to
the function $t\mapsto\E_B[Y^{(p)}_{t\wedge S_\ell}]$ for a fixed $\ell$ and then let $t=T$ and $\ell\to\infty$ in the obtained inequality to establish a desired bound for $\E_B[Y^{(p)}_T]$.  Note that we introduced the localizing sequence $\{S_\ell; \ell\in\mathbb{N}\}$ 
in order to guarantee that 
$\int_0^t \E_B[Y^{(p)}_{r\wedge S_\ell}]\,\dd E_r\le \ell E_t<\infty$, 
which enables us to apply the Gronwall-type inequality. 

Fix $\ell\in\mathbb{N}$ and $t\in[0,T]$. Since $E$ has continuous paths, by Theorem 10.17 of \cite{Jacod} (also see Lemma 2.4 and Example 2.5 of \cite{Kobayashi}), $B\circ E$ is a continuous martingale with quadratic variation
$
	[B\circ E,B\circ E]=[B,B]\circ E=E.
$
By the It\^o formula and representation \eqref{SDE_001}, 
\[
	X^p_s
	=x_0^p+\int_0^s \left\{pX_r^{p-1} F(r,X_r)+\frac 12p(p-1)X_r^{p-2} G^2(r,X_r)\right\}\,\dd E_r+M_s,
\]
where
\[
	M_s=\int_0^s pX_r^{p-1} G(r,X_r)\,\dd B_{E_r}.
\]
 Due to condition \eqref{SDE_condition2}, the absolute value of the integrand of the $\dd E_r$ integral is dominated by 
\begin{align*}
	pK|X_r|^{p-1}(1+|X_r|)+\frac 12 p(p-1)K^2|X_r|^{p-2}(1+|X_r|)^2
	\le A_1Y_r^{(p)}, 
\end{align*}
where $A_1=pc_pK+p(p-1)c_pK^2/2$. Thus, 
\begin{align}\label{R01}
	Y^{(p)}_{t\wedge S_\ell}
	=1+\sup_{0\le s\le t\wedge S_\ell} |X_s|^p
	\le 1+|x_0|^p+I_1+I_2,
\end{align}
where 
\[
	I_1= A_1\int_0^{t\wedge S_\ell} Y^{(p)}_r\,\dd E_r \ \ \ \textrm{and} \ \ \ 
	I_2=\sup_{0\le s\le {t\wedge S_\ell}}|M_s|.
\]
Note that for any nonnegative process $u(r)$, the inequality 
\begin{align}\label{Holder_072}
	\int_0^{t\wedge S_\ell} u(r)\,\dd E_r\le \int_0^t u({r\wedge S_\ell})\,\dd E_r
\end{align}
holds.
Indeed, the inequality obviously holds if $t\le S_\ell$, while if $t>  S_\ell$, then 
$
	\int_0^t u({r\wedge S_\ell})\,\dd E_r
	=\int_0^{S_\ell} u(r)\,\dd E_r+\int_{S_\ell}^t u({S_\ell})\,\dd E_r
	\ge \int_0^{t\wedge S_\ell} u(r)\,\dd E_r,
$
thereby yielding \eqref{Holder_072}. 
Thus, 
\begin{align}\label{R02}
	\E_B[I_1]\le A_1\int_0^t \E_B[Y^{(p)}_{r\wedge S_\ell}]\,\dd E_r.
\end{align}

To deal with $I_2$, note that the stochastic integral $(M_t)_{t\ge 0}$ is a local martingale
 with quadratic variation $[M,M]_t=\int_0^t p^2X_r^{2p-2}G^2(r,X_r) \,\dd E_r$ since stochastic integration preserves the local martingale property; see Chapter III, Theorem 29 in \cite{Protter}.
By condition \eqref{SDE_condition2}, for $0\le r\le t\wedge S_\ell$,
\begin{align*}
	p^2X_r^{2p-2}G^2(r,X_r)
	\le p^2 K^2X_r^{2p-2}(1+|X_r|)^2
	\le p^2c_p^2K^2  Y^{(p)}_{t\wedge S_\ell} Y^{(p)}_r,
\end{align*}
and hence, $([M,M]_{t\wedge S_\ell})^{1/2}$ is dominated by
\begin{align*}
	p c_p K \left(Y^{(p)}_{t\wedge S_\ell}\int_0^{t\wedge S_\ell} \hspace{-4mm}Y^{(p)}_r\,\dd E_r\right)^{1/2}
	\le p c_p K\left( \frac{Y^{(p)}_{t\wedge S_\ell}}{2b_1pc_pK}+2b_1pc_pK \int_0^{t\wedge S_\ell}  \hspace{-4mm}Y^{(p)}_r\,\dd E_r  \right),
\end{align*}
where we used the inequality $(ab)^{1/2}\le a/\lambda +\lambda b$ valid for any $a,b\ge 0$ and $\lambda>0$, and $b_1$ is the constant appearing in the Burkholder--Davis--Gundy inequality \eqref{Burkholder}. Applying the latter inequality and inequality \eqref{Holder_072} now gives 
\begin{align}\label{R03}
	\E_B[I_2]
	\le b_1\E_B\left[ ( [M,M]_{t\wedge S_\ell} )^{1/2}\right]
	\le \frac 12 \E_B[ Y^{(p)}_{t\wedge S_\ell}] +A_2 \int_0^t \E_B[Y^{(p)}_{r\wedge S_\ell}]\, \dd E_r,
\end{align}
where $A_2=2b_1 p^2 c_p^2K^2$. 
Note that the constant $b_1$ is independent of the stopping time ${t\wedge S_\ell}$, and in particular, $A_2$ does not depend on $\ell$.  
Taking $\E_B$ 
on both sides of \eqref{R01} and using \eqref{R02} and \eqref{R03} yields
\[
	\E_B[Y^{(p)}_{t\wedge S_\ell}]
	\le 1+|x_0|^p+\frac 12 \E_B[ Y^{(p)}_{t\wedge S_\ell}] +(A_1+A_2) \int_0^t \E_B[Y^{(p)}_{r\wedge S_\ell}]\, \dd E_r,
\]
which in turn gives  
$
	\E_B[Y^{(p)}_{t\wedge S_\ell}]\le 2(1+|x_0|^p)+2(A_1+A_2)\int_0^t \E_B[Y^{(p)}_{r\wedge S_\ell}]\, \dd E_r. 
$
Thus, applying a Gronwall-type inequality in Chapter IX.6a, Lemma 6.3 of \cite{JacodShiryaev} yields
\begin{align*}
	\E_B[Y^{(p)}_{t\wedge S_\ell}]
	\le 2(1+|x_0|^p) e^{2(A_1+A_2) E_t}
\end{align*}
for all $t\in[0,T]$. Note that $E_t$ appears in the exponent on the right hand side since the integral above is driven by the process $E$.
Setting $t=T$, letting $\ell\to\infty$ while recalling $A_1$ and $A_2$ do not depend on $\ell$, and using the monotone convergence theorem yields $\E_B[Y^{(p)}_T]\le 2(1+|x_0|^p) e^{2(A_1+A_2) E_T}$. Taking $\E_D$ on both sides, noting $\E_D[\E_B[Y^{(p)}_T]]=\E[Y^{(p)}_T]$, and using the fact that $\E[e^{\lambda E_T}]<\infty$ for any $\lambda>0$ yields 
the desired result.
\end{proof}

\begin{proof}[Proof of Theorem \ref{Theorem_with_noise}]
Let
\[
	Z_t:=\sup_{0\le s\le t}|X_s-X^\delta_s| \ \ \textrm{for} \ \ t\in[0,T].
\] 
As in the proof of Lemma \ref{lemma_Holder}, we use the localizing sequence $S_\ell=\inf\{t\ge 0;Z_{t+1}>\ell\}$, 
which allows us to safely apply a Gronwall-type inequality.  
Here, $S_\ell$ is defined as $\inf\{t\ge 0; Z_{t+1}>\ell\}$ instead of $\inf\{t\ge 0; Z_t>\ell\}$ in order to guarantee that $Z_{t\wedge S_\ell}\le \ell$ even if the process $Z$ exceeds the level $\ell$ by a jump. Note that the approximation $X^\delta$ is a piecewise constant process with finitely many jumps, so $Z_t<\infty$, and hence, $S_\ell\uparrow \infty$ as $\ell\to\infty$.
  In the remainder of the proof, however, to clarify the main ideas, we assume the function $t\mapsto Z_t$ is bounded.

By \eqref{Approx_001}--\eqref{Approx_003},
\begin{align*}
	X^\delta_s-x_0
	=\sum_{i=0}^{n_s-1}(X^\delta_{\tau_{i+1}}\hspace{-2pt}-X^\delta_{\tau_i})
	=\sum_{i=0}^{n_s-1} \left(F(\tau_i,X^\delta_{\tau_i})\delta +  G(\tau_i, X^\delta_{\tau_i})(B_{(i+1)\delta}-B_{i\delta})\right). 
\end{align*}
Note that $E_{\tau_i}=E_{D_{i\delta}}=i\delta$ and that $\tau_i=\tau_{n_r}$ for any $r\in[\tau_i,\tau_{i+1})$, which implies the above can be rewritten as
\[
	X^\delta_s-x_0
	=\int_0^{\tau_{n_s}} F(\tau_{n_r},X^\delta_r)\,\dd E_r
		+\int_0^{\tau_{n_s}} G(\tau_{n_r},X^\delta_r)\,\dd B_{E_r}.
\]
Hence, for a fixed $t\in[0,T]$,
\begin{align}\label{Eq021}
	Z_t\le I_1+I_2+I_3+I_4, \ \ 
	Z_t^2\le 4(I_1^2+I_2^2+I_3^2+I_4^2),
\end{align}
where
\begin{align*}
	I_1&=\hspace{-1pt}\sup_{0\le s\le t} \left| \int_0^{\tau_{n_s}} \hspace{-2mm}(F(r,X_r)\hspace{-1pt}-\hspace{-1pt}F(\tau_{n_r},X^\delta_r))\,\dd E_r \right|,
	\hspace{3mm}I_2=\hspace{-1pt}\sup_{0\le s\le t} \left| \int_{\tau_{n_s}}^s \hspace{-2mm}F(r,X_r)\,\dd E_r\right |,\\
	I_3&=\hspace{-1pt}\sup_{0\le s\le t} \left| \int_0^{\tau_{n_s}} \hspace{-2mm}(G(r,X_r)\hspace{-1pt}-\hspace{-1pt}G(\tau_{n_r},X^\delta_r))\,\dd B_{E_r} \right|, \
	I_4=\hspace{-1pt}\sup_{0\le s\le t} \left| \int_{\tau_{n_s}}^s \hspace{-2mm}G(r,X_r)\,\dd B_{E_r} \right|. 
\end{align*}

In terms of $I_1$, note that for $r\in[0,\tau_{n_t})$, by conditions \eqref{SDE_condition1} and \eqref{SDE_condition3},
\begin{align}\label{R11}
\left|F(r,X_r)-F(\tau_{n_r},X^\delta_r)\right|
&\le \left|F(r,X_r)-F(\tau_{n_r},X_r)\right|+\left|F(\tau_{n_r},X_r)-F(\tau_{n_r},X^\delta_r)\right|\notag\\
&\le K(1+|X_r|)(r-\tau_{n_r})^{\theta_F} +K|X_r-X^\delta_r|\notag\\
&\le KY^{(1)}_{\tau_{n_t}}(\tau_{n_r+1}-\tau_{n_r})^{\theta_F} +KZ_r,\hspace{-10mm}
\end{align}
where $Y^{(p)}_t=1+\sup_{0\le s\le t} |X_s|^p$ as in the proof of Lemma \ref{lemma_Holder}. 
The function $g(x):=x^{1/{\theta_F}} \ (x\ge 0)$ is convex since ${\theta_F}\in(0,1]$, so by Jensen's inequality, 
 \begin{align*}
	&\left(\sum_{i=0}^{n_t-1}(\tau_{i+1}-\tau_i)^{\theta_F}\right)^{1/{\theta_F}}
	=g\left(\frac{1}{n_t}\sum_{i=0}^{n_t-1} n_t (\tau_{i+1}-\tau_i)^{\theta_F}\right)\\
	&\le \frac {1}{n_t} \sum_{i=0}^{n_t-1}g\left(n_t (\tau_{i+1}-\tau_i)^{\theta_F}\right)
	=n_t^{1/{\theta_F}-1}\sum_{i=0}^{n_t-1} (\tau_{i+1}-\tau_i)
	=n_t^{1/{\theta_F}-1}\tau_{n_t}. 
 \end{align*}
This, together with the identity $\int_{\tau_i}^{\tau_{i+1}}(\tau_{n_r+1}-\tau_{n_r})^{\theta_F}\,\dd E_r=\delta (\tau_{i+1}-\tau_i)^{\theta_F}$, yields
 \begin{align}\label{Eq032}
 	 \int_0^{\tau_{n_t}}(\tau_{n_r+1}-\tau_{n_r})^{\theta_F}\,\dd E_r
	= \delta \sum_{i=0}^{n_t-1}(\tau_{i+1}-\tau_i)^{\theta_F} 
	\le \delta n_t^{1-{\theta_F}}\tau_{n_t}^{\theta_F}.
 \end{align}
By \eqref{property_Spsidelta}--\eqref{ineq_Spsi}, $n_t$ is a random variable satisfying the relation $n_t\delta =E^\delta_t\le E_t$. Using the inequalities $\tau_{n_t}\le t\le T$ 
 and putting together \eqref{R11} and \eqref{Eq032} yields
 \begin{align}\label{Eq002}
I_1\le K \delta^{\theta_F} T^{\theta_F}  E_T^{1-{\theta_F}}Y^{(1)}_T
+K\int_0^t Z_r\,\dd E_r. 
\end{align}
By the Cauchy--Schwarz inequality, this implies
\begin{align}\label{Eq022}
	I_1^2
	&\le 4K^2 \delta^{2{\theta_F}} T^{2{\theta_F}}  E_T^{2(1-{\theta_F})}Y^{(2)}_T 
+2K^2 E_T \int_0^t Z_r^2\,\dd E_r.
\end{align}

On the other hand, in terms of $I_2$, by condition \eqref{SDE_condition2},
\begin{align}\label{Eq003}
	I_2
	\le \sup_{0\le s\le t} K\int_{\tau_{n_s}}^{s}  (1+|X_r|)\,\dd E_r 
	\le \sup_{0\le s\le t} K Y^{(1)}_s (E_s-E_{\tau_{n_s}})
	\le K\delta Y^{(1)}_T,
\end{align}
which implies 
\begin{align}\label{Eq023}
	I_2^2
	\le 2K^2\delta^2 Y^{(2)}_T.
\end{align}
To deal with $I_3$, note that $B\circ E$ is a martingale with quadratic variation $E$, and hence, 
by the Burkholder--Davis--Gundy inequality \eqref{Burkholder}, 
\begin{align*}
	\E_B[I_3^2]
	&\le \E_B\left[\sup_{0\le s\le \tau_{n_t}} \left|\int_0^{s} (G(r,X_r)-G(\tau_{n_r},X^\delta_r)) \,\dd B_{E_r}\right|^2\right]\notag\\
	&\le b_2 \E_B\left[\int_0^{\tau_{n_t}} (G(r,X_r)-G(\tau_{n_r},X^\delta_r))^2 \,\dd E_r\right].
\end{align*}
 An estimation similar to the one in \eqref{R11} gives
\begin{align*}
	\E_B[I_3^2]
	\le 4K^2b_2 \E_B [Y^{(2)}_{\tau_{n_t}}]
	\int_0^{\tau_{n_t}}(\tau_{n_r+1}-\tau_{n_r})^{2{\theta_G}}\,\dd E_r
	+2K^2b_2 \int_0^{\tau_{n_t}} \E_B[Z_r^2]\,\dd E_r. 
 \end{align*}
Note that for ${\theta_G}\in (1/2,1]$, we have $|t-s|^{\theta_G}\le (2T)^{{\theta_G}-1/2}|t-s|^{1/2}$, so there is no loss of generality in assuming ${\theta_G}\in(0,1/2]$ in \eqref{SDE_condition4}.
Assuming ${\theta_G}\in(0,1/2]$, we can use \eqref{Eq032} with ${\theta_F}\in(0,1]$ replaced by $2{\theta_G}$, yielding 
\begin{align}\label{Eq034}
	\E_B[I_3^2]
	&\le 4K^2b_2 \E_B[Y^{(2)}_{\tau_{n_t}}] \delta n_t^{1-2{\theta_G}}\tau_{n_t}^{2{\theta_G}}
	+2K^2b_2 \int_0^{\tau_{n_t}} \E_B[Z_r^2]\,\dd E_r\notag\\
	&\le 4\delta^{2{\theta_G}}K^2b_2 T^{2{\theta_G}}E_T^{1-2{\theta_G}}  \E_B[Y^{(2)}_T]
	+2K^2b_2 \int_0^t \E_B[Z_r^2]\,\dd E_r, 
\end{align}
where we used $n_t\delta=E^\delta_t\le E_t$ and $\tau_{n_t}\le t\le T$.  

Estimation of $I_4^2$ requires some careful work.
By the change-of-variable formula for stochastic integrals driven by time-changed semimartingales (Theorem 3.1 of \cite{Kobayashi}) and the relation $E_{\tau_{n_s}}=n_s\delta=E^\delta_s$, it follows that 
\[
	\E_B[I_4^2]
	=\E_B\left[ \sup_{0\le s\le t} \left| \int_{E^\delta_s}^{E_s} \hspace{-2mm}G(D_{v-},X_{D_{v-}})\,\dd B_v \right|^2\right]
	\le\E_B\left[\sup_{0\le r,s\le E_T,\,0\le s-r\le \delta} \hspace{-2mm}N_{r,s}^2\right],
\]
where $N_{r,s}=\int_r^s G(D_{v-},X_{D_{v-}})\,\dd B_v$.
Note that the use of the change-of-variable formula requires that the semimartingale ($B$ in this case) be constant on any interval of the form $[E_{t-},E_t]$, but this is satisfied since the time change $E$ has continuous paths. 
Observe that 
\begin{align}
	&\E_B[I_4^2]
	=\E_B[\mathbf{1}_{E_T\ge\delta}\cdot I_4^2]+\E_B[\mathbf{1}_{E_T<\delta}\cdot I_4^2]\notag\\
	&\le\E_B\left[\mathbf{1}_{E_T\ge\delta}\cdot \sup_{0\le r,s\le E_T,\,0\le s-r\le \delta} N_{r,s}^2\right]
	+\E_B\left[\sup_{0\le r,s\le \delta,\,0\le s-r\le \delta} N_{r,s}^2\right],\label{R21}
\end{align}
where $\mathbf{1}_A$ denotes the indicator function on a set $A$. 
Also, for all $0\le r<s\le u$,
\[
	\int_r^s G^2(D_{v-},X_{D_{v-}})\, \dd v \le K^2 \int_r^s (1+|X_{D_{v-}}|)^2\, \dd v\le \xi(u)|s-r|,
\]
where 
$
	\xi(u):=2K^2 Y^{(2)}_{D_{u-}}.
$
By Lemma \ref{lemma_Holder}, 
for $\alpha>0$, 
\[
	\E_B[\xi(E_T)^{1+\alpha}]
	\le (2K^2)^{1+\alpha} c_{1+\alpha} 
	\E_B[Y^{(2(1+\alpha))}_T]
	<\infty
\]
and $\E_B[\xi(\delta)^{1+\alpha}]<\infty$ $\P_D$-a.s.
These guarantee that Theorem 1 of \cite{FischerNappo}, which concerns modulus of continuity for stochastic integrals, is applicable. Namely, there exists a constant $C_1$ such that 
$
	\E_B\left[\sup_{0\le r,s\le u, \, 0\le s-r\le \delta} N_{r,s}^2\right]
	\le C_1 \delta \log\left(2 u/\delta\right)
$
for all $0<\delta\le u$ with $u=E_T$ and $u=\delta$, and their proof shows that $C_1$ can be taken independently of $u$. This, together with \eqref{R21}, gives
\begin{align}
	\E_B[I_4^2]
	\le C_1 \delta \log\left(\frac{2 E_T}{\delta}\right)+C_1 \delta \log 2
	=C_1 \delta \log\left(\frac {4 E_T}{\delta}\right).\label{R13}
\end{align}

Now, let us assume that condition (i) holds; i.e. $\psi\in \mathrm{RV}_\beta$ with $\beta\in(1/2,1)$.
 Putting together the estimates \eqref{Eq021}, \eqref{Eq022}, \eqref{Eq023}, \eqref{Eq034} and \eqref{R13} gives
\begin{align}
	\E_B[Z_t^2]
    \le (V_1+V_2)+8K^2(E_T+b_2)\int_0^t \E_B[Z_r^2]\,\dd E_r,\label{R05}
\end{align}
where 
\[
	V_1=4C_1 \delta \log\left(\frac {4 E_T}{\delta}\right), \ \ 
	V_2=C_2 (\delta^2+E_T^{2(1-{\theta_F})}\delta^{2{\theta_F}}+E_T^{1-2{\theta_G}}\delta^{2{\theta_G}})\E_B[Y^{(2)}_T]
\]
with $C_2$ being a constant depending on $K$, $T$, ${\theta_F}$ and ${\theta_G}$.
Applying a Gronwall-type inequality in Chapter IX.6a, Lemma 6.3 of \cite{JacodShiryaev} and taking $\E_D$
on both sides of the obtained inequality with $t=T$ gives
\begin{align*}
	\E[Z_T^2]\le \E[(V_1+V_2)e^{8K^2(E_T+b_2)E_T}]
	\le \left(\E[(V_1+V_2)^2]\cdot \E[e^{16K^2(E_T+b_2)E_T}]\right)^{1/2}.
\end{align*}
The assumption $\beta\in(1/2,1)$ allows us to use Theorem \ref{Theorem_general} with $r=2<1/(1-\beta)$, which implies $\E[e^{16K^2(E_T+b_2)E_T}]<\infty$. Moreover, $\E[V_2^2]<\infty$ since $\E[Y^{(p)}_T]<\infty$ for any $p\ge 1$ by Lemma \ref{lemma_Holder}. Let $\ve\in(0,1/2)$. 
Note that $\log x<\ve^{-1} x^{2\ve}$ for all $x>0$, so $(\delta^{2\varepsilon} \log(4E_T/\delta))^{2}\le \ve^{-2}(4E_T)^{4\varepsilon}$.
As the right hand side has finite expectation and is independent of $\delta$, by the dominated convergence theorem,
$
	\delta^{2(2\varepsilon-1)}
		\E[V_1^{2}]
	=16C_1^2\E[(\delta^{2\ve} \log(4 E_T/\delta))^{2}] \to 0
$
as $\delta\downarrow 0$. Therefore, there exists $\delta_0=\delta_0(\varepsilon)\in (0,1)$ such that 
$\E[V_1^{2}]<\delta^{2(1-2\varepsilon)}$ for all $\delta\in(0,\delta_0)$. We have thus obtained the inequality 
$\E[Z_T^2]\le C_3(\delta^{1-2\ve}+\delta^{2\theta_F}+\delta^{2\theta_G})$ for some finite constant $C_3$. The obvious inequality $\E[Z_T]\le (\E[Z_T^2])^{1/2}$ now completes the proof of the theorem with condition (i).

We now turn to the proof of the theorem 
with condition (ii) that $G(t,x)=G(t)$ for $(t,x)\in[0,T]\times \R$. Since $G(t,x)$ does not depend on $x$, the integral $\int_0^t \E_B[Z_r^2]\,\dd E_r$  in \eqref{Eq034} vanishes. Moreover, since condition \eqref{SDE_condition4} 
simplifies to
$|G(s)-G(t)|\le K|s-t|^{\theta_G}$, the expression $ \E_B[Y^{(2)}_T]$ in \eqref{Eq034} also disappears.
Thus, by inequalities \eqref{Eq021}, \eqref{Eq002}, \eqref{Eq003}, \eqref{Eq034} and \eqref{R13}, as well as the obvious inequality $\delta\le \delta^{\theta_F}$ valid for $\delta\in(0,1)$,
\begin{align}\label{R15}
	\E_B[Z_t]
	\le V_3+K\int_0^{t} \E_B[Z_r]\,\dd E_r, 
\end{align}
where 
\[
	V_3=K\delta^{\theta_F} (1+  T^{\theta_F} E_T^{1-{\theta_F}})\E_B[Y^{(1)}_T]+C_4 \delta^{\theta_G}E_T^{1/2-{\theta_G}}+\left(C_1\delta \log\left(\frac{4E_T}{\delta}\right)\right)^{1/2}
\]
with $C_4$ being a constant depending on $K$, $T$, $\theta_G$ and $b_2$.
Applying a Gronwall-type inequality and 
taking $\E_D$ on both sides of the obtained inequality with $t=T$ gives 
\[
	\E[Z_T]
	\le \E[V_3e^{KE_T}]
	\le \left(\E[V_3^2]\cdot \E[e^{2KE_T}]\right)^{1/2}. 
\]
Note that since $\E[e^{2KE_T}]<\infty$ for any inverse subordinator with underlying L\'evy measure being infinite, we do not need to impose condition (i) that $\psi\in \mathrm{RV}_\beta$ with $\beta\in(1/2,1)$. 
The remainder of the proof is omitted since it is very similar to the argument given in the last part of the previous paragraph.
\end{proof}

\begin{remark}\label{Remark_with_noise}
\begin{em}
(1) Theorem \ref{Theorem_with_noise} is still valid when the underlying Laplace exponent takes the form $\widehat{\psi}(s)=as+\psi(s)$ with $a>0$ and $\psi\in \mathrm{RV}_\beta$ as in Remark \ref{Remark_general}(1). 

(2) If $G(t,x)$ depends on $x$, then in the above proof
the analysis of the squared error $Z_t^2$ instead of the $p$th power error $Z_t^p$
with $p\ne 2$ is essential.
Indeed, a straightforward modification of the proof would not lead to an inequality for $\E_B[Z^{p}_t]$ to which the Gronwall-type inequality is readily applicable. This is because replacing the integral $\int_0^t \E_B[Z_r^2]\,\dd E_r$ in \eqref{Eq034} by the integral $\int_0^t \E_B[Z^{p}_r]\,\dd E_r$ using the current method does not seem to be possible. Moreover, the appearance of the integral $\int_0^t \E_B[Z_r^2]\,\dd E_r$ in \eqref{Eq034} requires the estimate \eqref{Eq022} for $I_1^2$ be used instead of the estimate \eqref{Eq002} for $I_1$. The presence of $E_T$ in front of the integral $\int_0^t Z_r^2\,\dd E_r$ in \eqref{Eq022} (and hence in \eqref{R05} as well) is what amounts to the expression
$\E[e^{16K^2(E_T+b_2)E_T}]$. 
Condition (i) was imposed to guarantee the finiteness of the latter.

(3) The Euler--Maruyama scheme for classical It\^o SDEs (without a random time change) has order 1/2 of strong uniform convergence (see Theorem 10.2.2 of \cite{KloedenPlaten}). On the other hand, for SDE \eqref{SDE_001}, it only seems possible to derive an order strictly less than $1/2$. This is because even in the simple case when $G(t,x)\equiv 1$, in order to control the quantity $I_4=\sup_{0\le s\le t}|B_{E_s}-B_{E^\delta_s}|$, we need to use a result about the modulus of continuity for Brownian motion, which involves a logarithmic correction. 
However, the rate of convergence at the time horizon $T$ can be slightly improved since the discussion of the modulus of continuity is unnecessary.
  Namely, it follows that
 $
 	\E\left[ |X_T-X^\delta_T|\right]\le C(\delta^{{\theta_F}}+\delta^{{\theta_G}}+\delta^{1/2})
 $
 for all $\delta\in(0,1)$. 
\end{em}
\end{remark}

\noindent
\textbf{Acknowledgements:} 
The authors appreciate various comments and suggestions by anonymous referees that resulted in a succinct, better-organized paper with an improved result.

\includepdf[]{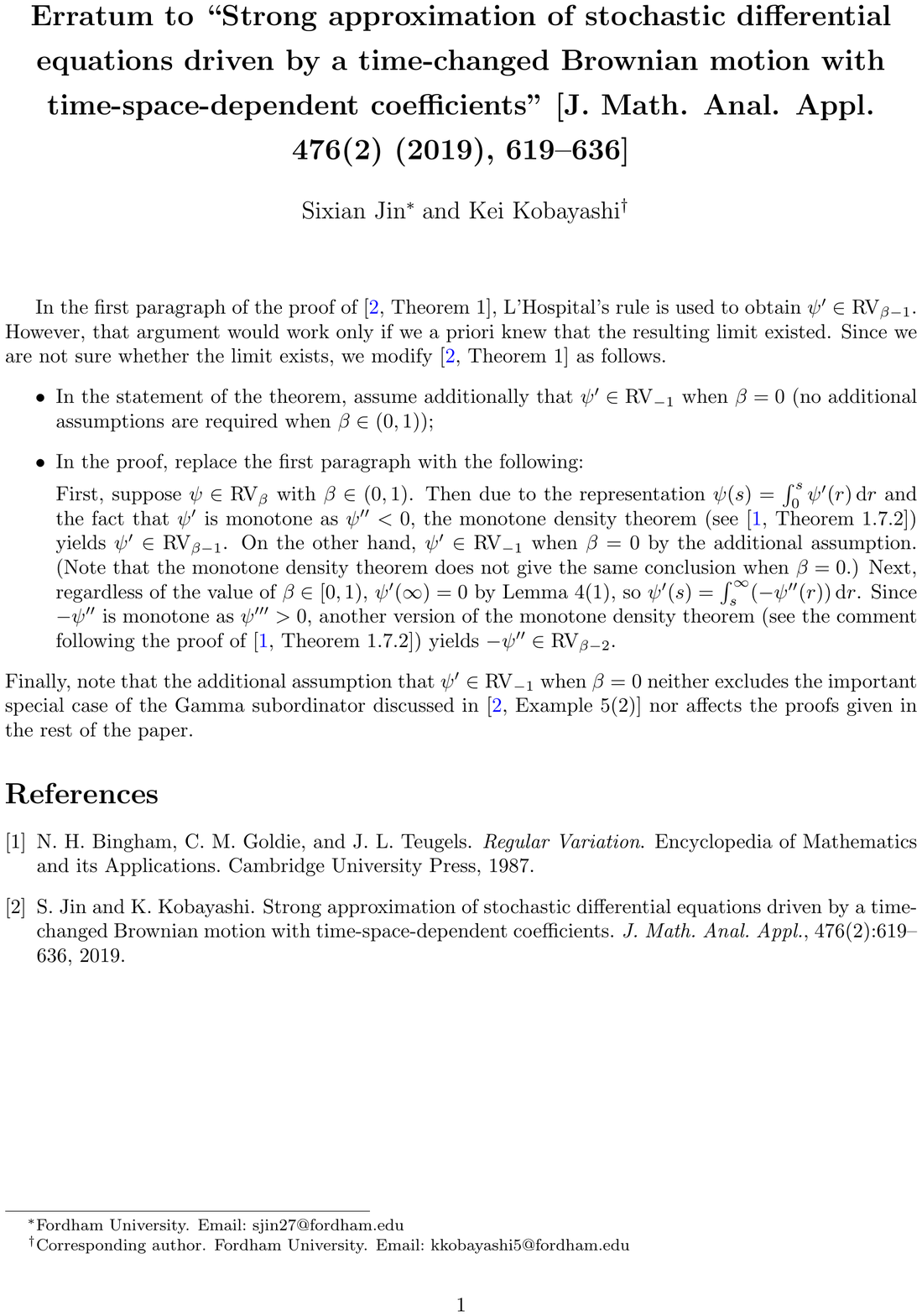}

\begin{thebibliography}{10}

\bibitem{BWM}
D.~A. Benson, S.~W. Wheatcraft, and M.~M. Meerschaert.
\newblock Application of a fractional advection-dispersion equation.
\newblock {\em \textit{Water Resour. Res.}}, 36(6):1403--1412, 2000.

\bibitem{Bingham_book}
N.~H. Bingham, C.~M. Goldie, and J.~L. Teugels.
\newblock {\em \textit{Regular Variation}}.
\newblock Encyclopedia of Mathematics and its Applications. Cambridge
  University Press, 1987.

\bibitem{ContTankov}
R.~Cont and P.~Tankov.
\newblock {\em \textit{Financial Modelling with Jump Processes}}.
\newblock Chapman and Hall/CRC, 2003.

\bibitem{FischerNappo}
M.~Fischer and G.~Nappo.
\newblock On the moments of the modulus of continuity of {I}t{\^o} processes.
\newblock {\em \textit{Stoch. Anal. Appl.}}, 28(1):103--122, 2008.

\bibitem{GhoshWebb}
R.~N. Ghosh and W.~W. Webb.
\newblock Automated detection and tracking of individual and clustered cell
  surface low density lipoprotein receptor molecules.
\newblock {\em \textit{Biophys. J.}}, 66(5):1301--1318, 1994.

\bibitem{GMSR}
R.~Gorenflo, F.~Mainardi, E.~Scalas, and M.~Raberto.
\newblock Fractional calculus and continuous-time finance {III}: the diffusion
  limit.
\newblock {\em \textit{Mathematical Finance}, Trends in Mathematics}, pages
  171--180, 2001.

\bibitem{HKRU}
M.~Hahn, K.~Kobayashi, J.~Ryvkina, and S.~Umarov.
\newblock On time-changed {G}aussian processes and their associated
  {F}okker--{P}lanck--{K}olmogorov equations.
\newblock {\em \textit{Elect. Comm. in Probab.}}, 16:150--164, 2011.

\bibitem{HKU-2}
M.~Hahn, K.~Kobayashi, and S.~Umarov.
\newblock Fokker--{P}lanck--{K}olmogorov equations associated with time-changed
  fractional {B}rownian motion.
\newblock {\em \textit{Proc. Amer. Math. Soc.}}, 139(2):691--705, 2011.

\bibitem{HKU-1}
M.~Hahn, K.~Kobayashi, and S.~Umarov.
\newblock {SDE}s driven by a time-changed {L}{\'e}vy process and their
  associated time-fractional order pseudo-differential equations.
\newblock {\em \textit{J. Theoret. Probab.}}, 25(1):262--279, 2012.

\bibitem{Heinsalu2007}
E.~Heinsalu, M.~Patriarca, I.~Goychuk, and P.~H{\"{a}}nggi.
\newblock Use and abuse of a fractional {F}okker--{P}lanck dynamics for
  time-dependent driving.
\newblock {\em \textit{Phys. Rev. Lett.}}, 99:120602, 2007.

\bibitem{Jacod}
J.~Jacod.
\newblock {\em \textit{Calcul Stochastique et Probl{\`e}mes de Martingales}},
  volume 714 of {\em Lecture Notes in Mathematics}.
\newblock Springer, Berlin, 1979.

\bibitem{JacodShiryaev}
J.~Jacod and A.~N. Shiryaev.
\newblock {\em \textit{Limit Theorems for Stochastic Processes}}, volume 288 of
  {\em Grundlehren der mathematischen Wissenschaften}.
\newblock Springer, Berlin, 2003.

\bibitem{JainPruitt}
N.~C. Jain and W.~E. Pruitt.
\newblock Lower tail probability estimates for subordinators and nondecreasing
  random walks.
\newblock {\em \textit{Ann. Probab.}}, 15(1):75--101, 1987.

\bibitem{JumKobayashi}
E.~Jum and K.~Kobayashi.
\newblock A strong and weak approximation scheme for stochastic differential
  equations driven by a time-changed {B}rownian motion.
\newblock {\em \textit{Probab. Math. Statist.}}, 36(2):201--220, 2016.

\bibitem{KaratzasShreve}
I.~Karatzas and S.~Shreve.
\newblock {\em \textit{Brownian Motion and Stochastic Calculus}}.
\newblock Springer-Verlag New York, second edition, 1998.

\bibitem{KloedenPlaten}
P.~E. Kloeden and E.~Platen.
\newblock {\em \textit{Numerical Solution of Stochastic Differential
  Equations}}.
\newblock Springer, corrected edition, 1992.

\bibitem{Kobayashi}
K.~Kobayashi.
\newblock Stochastic calculus for a time-changed semimartingale and the
  associated stochastic differential equations.
\newblock {\em \textit{J. Theoret. Probab.}}, 24(3):789--820, 2011.

\bibitem{Lv2012}
L.~Lv, W.~Qiu, and F.~Ren.
\newblock Fractional {F}okker--{P}lanck equation with space and time dependent
  drift and diffusion.
\newblock {\em \textit{J. Stat. Phys.}}, 149:619--628, 2012.

\bibitem{Magdziarz_simulation}
M.~Magdziarz.
\newblock Langevin picture of subdiffusion with infinitely divisible waiting
  times.
\newblock {\em \textit{J. Stat. Phys.}}, 135:763--772, 2009.

\bibitem{Magdziarz_spa}
M.~Magdziarz.
\newblock Stochastic representation of subdiffusion processes with
  time-dependent drift.
\newblock {\em \textit{Stoch. Proc. Appl.}}, 119:3238--3252, 2009.

\bibitem{Magdziarz_comment}
M.~Magdziarz, J.~Gajda, and T.~Zorawik.
\newblock Comment on fractional {F}okker--{P}lanck equation with space and time
  dependent drift and diffusion.
\newblock {\em \textit{J. Stat. Phys.}}, 154:1241--1250, 2014.

\bibitem{MagdziarzOrzelWeron}
M.~Magdziarz, S.~Orze{\l}, and A.~Weron.
\newblock Option pricing in subdiffusive {B}achelier model.
\newblock {\em J. Stat. Phys.}, 145(1):187, 2011.

\bibitem{MagdziarzZorawik}
M.~Magdziarz and T.~Zorawik.
\newblock {S}tochastic representation of fractional subdiffusion equation.
  {T}he case of infinitely divisible waiting times, {L}\'evy noise and
  space-time-dependent coefficients.
\newblock {\em \textit{Proc. Amer. Math. Soc.}}, 144:1767--1778, 2016.

\bibitem{MNX}
M.~M. Meerschaert, E.~Nane, and Y.~Xiao.
\newblock Correlated continuous time random walks.
\newblock {\em \textit{Statist. Probab. Lett.}}, 79:1194--1202, 2009.

\bibitem{MS_1}
M.~M. Meerschaert and H-P. Scheffler.
\newblock Limit theorems for continuous-time random walks with infinite mean
  waiting times.
\newblock {\em \textit{J. Appl. Probab.}}, 41:623--638, 2004.

\bibitem{MS_2}
M.~M. Meerschaert and H-P. Scheffler.
\newblock Triangular array limits for continuous time random walks.
\newblock {\em \textit{Stoch. Proc. Appl.}}, 118:1606--1633, 2008.

\bibitem{MetzlerKlafter00}
R.~Metzler and J.~Klafter.
\newblock The random walk's guide to anomalous diffusion: a fractional dynamics
  approach.
\newblock {\em \textit{Phys. Rep.}}, 339(1):1--77, 2000.

\bibitem{NaneNi2016}
E.~Nane and Y.~Ni.
\newblock Stochastic solution of fractional {F}okker--{P}lanck equations with
  space-time-dependent coefficients.
\newblock {\em \textit{J. Math. Anal. Appl.}}, 442:103--116, 2016.

\bibitem{NaneNi2017}
E.~Nane and Y.~Ni.
\newblock Stability of the solution of stochastic differential equation driven
  by time-changed {L}\'evy noise.
\newblock {\em \textit{Proc. Amer. Math. Soc.}}, 145:3085--3104, 2017.

\bibitem{NaneNi2018}
E.~Nane and Y.~Ni.
\newblock Path stability of stochastic differential equations driven by
  time-changed {L}\'evy noises.
\newblock {\em \textit{ALEA, Lat. Am. J. Probab. Math. Stat.}}, 15:479--507,
  2018.

\bibitem{Nigmatullin}
R.~R. Nigmatullin.
\newblock The realization of the generalized transfer equation in a medium with
  fractal geometry.
\newblock {\em \textit{Phys. Status Solidi B}}, 133:425--430, 1986.

\bibitem{Onalan2015}
{\"{O}}.~{\"{O}}nalan.
\newblock Subdiffusive {O}rnstein--{U}hlenbeck processes and applications to
  finance.
\newblock {\em \textit{Proceedings of the World Congress on Engineering 2015}},
  II, 2015.

\bibitem{Protter}
P.~Protter.
\newblock {\em \textit{Stochastic Integration and Differential Equations}}.
\newblock Springer, second edition, 2004.

\bibitem{Sato}
K.~Sato.
\newblock {\em \textit{L{\'e}vy Processes and Infinitely Divisible
  Distributions}}.
\newblock Cambridge University Press, 1999.

\bibitem{Saxton}
M.~J. Saxton and K.~Jacobson.
\newblock Single-particle tracking: applications to membrane dynamics.
\newblock {\em \textit{Annu. Rev. Biophys. Biomol. Struct.}}, 26:373--399,
  1997.

\bibitem{SokolovKlafter2006}
I.~M. Sokolov and J.~Klafter.
\newblock Field-induced dispersion in subdiffusion.
\newblock {\em \textit{Phys. Rev. Lett.}}, 97:140602, 2006.

\bibitem{HKU-book}
S.~Umarov, M.~Hahn, and K.~Kobayashi.
\newblock {\em Beyond the {T}riangle: {B}rownian {M}otion, {I}t\^o {C}alculus,
  and {F}okker--{P}lanck {E}quation --- {F}ractional {G}eneralizations}.
\newblock World Scientific, 2018.

\bibitem{WeronMagdziarzWeron2008}
A.~Weron, M.~Magdziarz, and K.~Weron.
\newblock Modeling of subdiffusion in space-time-dependent force fields beyond
  the fractional {F}okker--{P}lanck equation.
\newblock {\em \textit{Phys. Rev. E}}, 77:036704, 2008.

\bibitem{Wu}
Q.~Wu.
\newblock Stability of stochastic differential equation with respect to
  time-changed {B}rownian motion. {A}vailable at \texttt{arXiv:1602.08160}.
\newblock 2016.

\bibitem{Zaslavsky}
G.~M. Zaslavsky.
\newblock Fractional kinetic equation for {H}amiltonian chaos.
\newblock {\em \textit{Physica D}}, 76(1):110--122, 1994.

\end{thebibliography}
\end{document}